\documentclass[12pt,twoside]{article}

\usepackage[ansinew]{inputenc}
\usepackage{amsfonts}
\usepackage{latexsym}
\usepackage{amsmath}
\usepackage{amssymb}

\textheight211mm
\addtolength{\hoffset}{0.5cm}
\addtolength{\voffset}{0.1cm}
\setlength\arraycolsep{2pt}

\setlength\arraycolsep{2pt}

\newtheorem{lettertheorem}{Theorem}

\newtheorem{defin}{Definition}

\newtheorem{theorem}{Theorem}

\newtheorem{exa}{Example}

\newtheorem{lemma}[defin]{Lemma}

\newenvironment{proof}
{\noindent{\it Proof.}}{\hfill $\Box$\par\vspace{2.5mm}}
\newenvironment{proof2}
{\noindent{\it Proof of Theorem 1.}}{\hfill $\Box$\par\vspace{2.5mm}}
\newenvironment{remark}
{\par\vspace{2.5mm}\noindent{\bf Remark.}}{\par\vspace{2.5mm}}

\newtheorem{que}{Question}

\newtheorem{pro}{Problem}

\numberwithin{equation}{section}

\makeatletter
\renewcommand{\ps@myheadings}{%
\renewcommand{\@evenfoot}{}%
\renewcommand{\@oddfoot}{}%
}\makeatother \pagestyle{myheadings}

\setlength{\evensidemargin}{19pt} \setlength{\oddsidemargin}{19pt}

\title{Admissible solutions of delay Schwarzian differential equations }

\author{Shi-Jian Wu}
\date{}

\begin{document}
\pagestyle{plain}

\maketitle

\begin{abstract}
  In this paper, we study delay differential equations involving the Schwarzian derivative $S(f,z)$, expressed in the form 
  \begin{equation*}
  f(z+1)f(z-1) + a(z)S(f,z) =R(z,f(z))= \frac{P(z,f(z))}{Q(z,f(z))}
  \end{equation*}
  where $a(z)$ is rational, $P(z,f)$ and $Q(z,f)$ are coprime polynomials in $f$ with rational coefficients.  
  Our main result shows that if a subnormal transcendental meromorphic solution exists, then the rational function  $R(z,f)=P(z,f)/Q(z,f)$ satisfies $\deg_fR\leq 7$ and $\deg_fP\leq \deg_fQ +2$, where $\deg_fR  =\max\{\deg_fP, \deg_fQ\}.$ 
  Furthermore, for any rational root $b_1$ of $Q(z,f)$ in $f$ with multiplicity $k$, we show that $k \leq 2$.   
 Finally, a classification of such equations is provided according to the multiplicity structure of the roots of $Q(z,f)$. Some examples are given to support these results. 
  \medskip
  \noindent
 
  \textbf{Keyword}:~delay differential equations; Nevanlinna theory; Schwarzian derivatives; subnormal solutions
  
  \medskip
  \noindent
  \textbf{2020MSC}: 34M04; 30D35
  
  \end{abstract}

\section{Introduction}

An ordinary differential equation is said to possess the Painlev$\acute{e} $ property if its solutions are single-valued about all movable singularities\cite{ConteBook,  LaineBook2}. 
Recent interest in this property stems from statistical physics and partial differential equations; for instance, these equations with Painlev$\acute{e} $ property provide exact solutions for the two-dimensional Ising model \cite{ConteBook1}. 
Beyond these applications, the significance of studying equations with Painlev$\acute{e} $ property lies in their dual role: they serve both as a source for defining new functions and a class of equations to be integrated with the existing functions available. 
This classification originated in the early 20th century through the work of Painlev$\acute{e}$\cite{Pain-1900,Pain-1902}, Fuchs\cite{Fuchs-1907} and Gambier\cite{Gamb-1907}, who systematically classified second order differential equations with the Painlev$\acute{e} $ property. 
Their analysis culminated in six canonical forms, now named as the Painlev$\acute{e} $ equations. 

The analogues of Painlev$\acute{e} $ property for complex difference equations have been discussed.  
Ablowitz, Halburd and Herbst\cite{Ablo-2000} have advocated that the existence of sufficiently many finite order meromorphic solutions could be considered as a version of the Painlev$\acute{e} $ property for difference equations. 
Their work established Nevanlinna theory as a fundamental tool for studying complex difference equations. 
Building on this foundation,  Halburd and Korhonen\cite{Halb-2007} proved that if the difference equation 
\begin{equation}\label{eqi6}
  f(z+1)+f(z-1)=R(z,f), 
\end{equation}  
where $R(z,f)$ is rational in $f$ with meromorphic coefficients, has an admissible meromorphic solution of finite order, then it reduces to a short list of canonical forms including the difference Painlev$\acute{e} $ ${\mathrm{I}}$ and ${\mathrm{II}}$  equations. 
Further studies of difference Painlev$\acute{e} $ equations have been carried out by Halburd and Korhonen \cite{Halb2-2007}, Ronkainen \cite{Ronk-2010}, and Wen \cite{Wen-2016, Wen2-2016}.  

Complex equations combining difference operators and derivatives of meromorphic functions are termed complex differential-difference equations or complex delay differential equations \cite{LKBook}. 
Some reductions of integrable differential-difference equations are known to yield delay differential equations with formal continuum limits to Painlev$\acute{e} $ equations.
For instance,  Quispel, Capel and Sahadevan\cite{Quis-1992} derived 
\begin{equation}\label{eqi1}
  f(z)[f(z+1)-f(z-1)]+af'(z)=bf(z),
\end{equation}
where $a$ and $b$ are constants, through symmetry reduction of the Kac-van Moerbeke equation. This equation possesses a formal continuum limit to the first Painlev$\acute{e} $ equation:  
\begin{equation*}
  \frac{d^2 y}{dt^2}=6y^2+t.
\end{equation*}

Subsequently, Halburd and Korhonen \cite{Halb-2017} investigated a generalization of \eqref{eqi1} and reduced this extended equation. Fundamental concepts of Nevanlinna theory are detailed in \cite{LaineBook}. Their main result is given below\cite[Theorem 1.1]{Halb-2017}: 
 \begin{lettertheorem}
Let $f(z)$ be a transcendental meromorphic solution of 
\begin{equation}\label{eqi2}
  f(z+1)-f(z-1)+a(z)\frac{f'(z)}{f(z)}=R(z,f(z))=\frac{P(z,f(z))}{Q(z,f(z))},
\end{equation}
where $a(z)$ is rational, $P(z,f(z))$ is a polynomial in $f$ having rational coefficients in $z$, and $Q(z,f(z))$ is a polynomial in $f$ with roots that are non-zero rational functions of $z$ 
and not roots of $P(z,f(z))$. If the hyper order of $f(z)$ is less than one, then 
\begin{equation*}
  \deg_fP=\deg_fQ+1\leq 3 \ or \ \deg_fR : =\max\{\deg_fP, \deg_fQ\}\leq 1.
\end{equation*}
\end{lettertheorem}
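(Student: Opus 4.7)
The plan is to combine the Halburd--Korhonen--Tohge difference lemma and the Valiron--Mohon'ko identity (for a coarse $T$-estimate) with a local pole-counting analysis at generic poles and at the $b_j$-points (to get the sharp dichotomy).

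First I would record the estimates available when $\rho_2(f) < 1$: the difference lemma gives $m(r, f(z \pm 1)/f(z)) = S(r, f)$, hence $T(r, f(z \pm 1)) = T(r, f) + S(r, f)$ and $N(r, f(z \pm 1)) = N(r, f) + S(r, f)$; the logarithmic derivative lemma gives $m(r, f'/f) = S(r, f)$; and Valiron--Mohon'ko gives $T(r, R(z, f)) = \deg_f R \cdot T(r, f) + S(r, f)$. Applying these term by term to \eqref{eqi2} already yields the coarse bound $\deg_f R \leq 4$, which is not sharp but pins down the regime.

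To improve this to the stated dichotomy, I would pass to local pole analysis. Let $z_0$ be a pole of $f$ of multiplicity $p$ such that neither $z_0 \pm 1$ is a pole of $f$ and $a$ is holomorphic at $z_0$; call such poles \emph{generic}. At such a point, $f(z \pm 1)$ is holomorphic and $a(z) f'(z)/f(z)$ has at most a simple pole, so the LHS of \eqref{eqi2} has a pole of order at most $1$. The RHS satisfies $R(z, f(z)) \sim c(z_0) f(z)^{\deg_f P - \deg_f Q}$ near $z_0$ (for $\deg_f P > \deg_f Q$), giving a pole of order $p(\deg_f P - \deg_f Q)$. Matching forces $p = 1$ and $\deg_f P = \deg_f Q + 1$; if instead $\deg_f P \leq \deg_f Q$, the RHS is holomorphic at every generic pole while the LHS has a simple pole, an impossibility unless $f$ has no generic poles. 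After the cluster bookkeeping below, that degenerate case forces $N(r, f) = S(r, f)$ and, via a Clunie-type argument applied to \eqref{eqi2}, collapses to $\deg_f R \leq 1$.

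To bound $\deg_f Q$, I would analyse the $b_j$-points, where $b_1, \dots, b_s$ are the distinct non-zero rational roots of $Q(z, f) = 0$. At a point $z_0$ with $f(z_0) = b_j(z_0)$ whose shifts are not poles of $f$, the term $f'/f$ is holomorphic (since $b_j \neq 0$) and the shifts are holomorphic, so the LHS is holomorphic and the pole of $R$ at $f = b_j$ must be cancelled by a zero of $P(z, f)$. Counting gives $\sum_j \overline N(r, 1/(f - b_j)) \leq T(r, P(z, f)) + S(r, f) \leq \deg_f P \cdot T(r, f) + S(r, f)$. Combining with Nevanlinna's Second Main Theorem for the small targets $\{b_1, \dots, b_s, \infty\}$, namely $(s - 1) T(r, f) \leq \sum_j \overline N(r, 1/(f - b_j)) + \overline N(r, f) + S(r, f)$, and the relation $\deg_f P = \deg_f Q + 1$ together with $s \leq \deg_f Q$, gives $\deg_f Q \leq 2$, hence $\deg_f P \leq 3$.

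The main obstacle will be the pole-cluster bookkeeping: rigorously showing that the poles of $f$ whose shift-neighbours $z_0 \pm 1$ are also poles contribute only $S(r, f)$ to $N(r, f)$, so that the ``generic pole'' argument really captures almost all of $N(r, f)$. This will require exploiting $\rho_2(f) < 1$ through a sliding argument along the pole-sequences $\{z_0 + k\}_{k \in \mathbb{Z}}$, controlled by iterates of $N(r, f(z + 1)) - N(r, f) = S(r, f)$. Without this control, the sharp separation between the $\deg_f R \leq 1$ branch and the $\deg_f P = \deg_f Q + 1 \leq 3$ branch would fail, and most of the analytic depth of the proof is concentrated here.
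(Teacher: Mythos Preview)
The statement you are attempting to prove is Theorem~A in the paper, which is \emph{not} proved there: it is quoted verbatim as the main result of Halburd and Korhonen \cite{Halb-2017} and serves only as background and motivation for the paper's own Theorems~1--3 on the Schwarzian variant. Consequently there is no ``paper's own proof'' against which to compare your proposal.

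That said, since you have sketched an argument, let me point out two concrete gaps. First, your $b_j$-point analysis is internally inconsistent: you assert that at a generic $b_j$-point the LHS is holomorphic and hence ``the pole of $R$ at $f=b_j$ must be cancelled by a zero of $P(z,f)$'', but the hypothesis explicitly forbids $b_j$ from being a root of $P$. The correct conclusion is rather that one of $f(z_0\pm 1)$ must be a pole of $f$, which is the mechanism Halburd--Korhonen actually exploit to link $b_j$-point counting to pole counting. Second, even granting your claimed inequality $\sum_j \overline N(r,1/(f-b_j))\le (\deg_f P)\,T(r,f)+S(r,f)$, combining it with the Second Main Theorem as you propose yields only $s-2\le \deg_f P=\deg_f Q+1$, and since $s\le \deg_f Q$ this is vacuous and does not force $\deg_f Q\le 2$. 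The actual Halburd--Korhonen proof does not use the Second Main Theorem in this way; it relies on a more delicate comparison between $\sum_j n(r,1/(f-b_j))$ and $n(r+s,f)$ via a lemma of the type recorded in the present paper as Lemma~\ref{lema}, together with careful shift-iteration of the equation.
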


Recent extensions of this theorem have been explored in \cite{Cao-2023,Cao-2022,Hu-2021,Nie-2025,Hu-2019}.  
Notably, Nie, Huang, Wang and Wu\cite{Nie-2025} replaced the logarithmic derivative in \eqref{eqi2} with the Schwarzian derivative 
\begin{equation*}
  S(f,z) := \frac{f'''(z)}{f'(z)} - \frac{3}{2} \left( \frac{f''(z)}{f'(z)} \right)^2,
  \end{equation*}
and analyzed reductions of the resulting equation. 
This work was motivated by Malmquist's results\cite{Malm-1913} on the equation $f'=R(z,f)$ and Ishizaki's classification \cite{Ishi-1991} of the Schwarzian differential equation $S(f,z)^n = R(z,f)$ 
into six canonical forms (up to certain transformations) for positive integers $n$. Before stating the main theorem, we recall the following definition from Nevanlinna theory: a transcendental meromorphic function 
$f$ is said to be subnormal if it satisfies  
\begin{equation}\label{defsub}
  \sigma_f := \limsup_{r\rightarrow \infty} \frac{\log  T(r,f)}{r}=0,
\end{equation} 
 Their main result is stated as follows\cite[Theorem 1.1]{Nie-2025}: 
\begin{lettertheorem}\label{lemc}
  Let $f(z)$ be a subnormal transcendental meromorphic solution of the equation 
  \begin{equation}\label{eqi3}
    f(z+1)-f(z-1) + a(z)S(f,z)=R(z,f)=\frac{P(z,f)}{Q(z,f)},
  \end{equation}
  where $a(z)$ is rational, $P(z,f)$ and $Q(z,f)$ are coprime polynomials in $f$ with rational coefficients. 
  Then $\deg_fR\leq 7$, and $\deg_fP\leq \deg_fQ+1$. Moreover, if $Q(z,f)$ has a rational function root $b_1$ in $f$ with multiplicity $k$, then $k\leq 2$.  
  
\end{lettertheorem}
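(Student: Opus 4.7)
My plan is Nevanlinna-theoretic. Apply the Valiron--Mohon'ko theorem to the right-hand side; since the coefficients of $P$ and $Q$ are rational (hence small with respect to the transcendental $f$), this gives
\[
T(r, R(z, f)) = \deg_f R \cdot T(r, f) + S(r, f).
\]
Bound the left-hand side using subnormality: the condition $\sigma_f = 0$ places $f$ well inside the regime of the Halburd--Korhonen difference lemma, so $T(r, f(z \pm 1)) = T(r, f) + S(r, f)$, while standard estimates of the form $m(r, f^{(k)}/f^{(j)}) = S(r, f)$ together with the identity $S(f,z) = f'''/f' - (3/2)(f''/f')^2$ yield $m(r, S(f, z)) = S(r, f)$. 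The entire argument then reduces to controlling $N(r, S(f, z))$ and drawing off the consequences of the main inequality.

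For the degree bound $\deg_f R \leq 7$, I would perform a local Laurent analysis of $S(f, z)$. At a pole of $f$ of order $p$ the leading $(z - z_0)^{-2}$ coefficient of $S(f, z)$ equals $-(p^2 - 1)/2$, so simple poles of $f$ contribute no double pole while poles of order $p \geq 2$ contribute exactly one; at a zero of $f'$ not lying over a pole of $f$, $S(f, z)$ has at most a double pole. Hence
\[
N(r, S(f, z)) \leq 2 \overline{N}(r, f) + 2 \overline{N}(r, 1/f') + S(r, f),
\]
and combining with $\overline{N}(r, 1/f') \leq T(r, f') + O(1) \leq T(r, f) + \overline{N}(r, f) + S(r, f)$ gives $T(r, S(f, z)) \leq 6 T(r, f) + S(r, f)$. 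Substituting into the main inequality yields $\deg_f R \cdot T(r, f) \leq 8 T(r, f) + S(r, f)$; the improvement from $8$ to $7$ comes from tracking the simple/higher-order pole dichotomy, since the two factors of $2$ cannot be saturated simultaneously (simple poles of $f$, needed to maximise $\overline{N}(r, f)$, contribute nothing to the Schwarzian's double-pole part).

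For the gap $\deg_f P \leq \deg_f Q + 1$, I would compare pole orders pointwise. At a pole $z_0$ of $f$ of order $p \geq 2$ which avoids the finitely many coefficient singularities and the shifted pole sets of $f(z \pm 1)$, the right-hand side has a pole of order $p(\deg_f P - \deg_f Q)$ when $\deg_f P > \deg_f Q$, while the left-hand side has order at most $2$ (the shifts $f(z_0 \pm 1)$ are finite and $a(z_0) S(f, z_0)$ contributes at most a double pole). Thus $p(\deg_f P - \deg_f Q) \leq 2$, which forces the gap to be at most $1$ as soon as some pole of order $\geq 2$ exists. For the multiplicity statement, I would apply the same template with zeros of $f(z) - b_1(z)$ replacing poles of $f$: if such a zero has multiplicity $\ell$, the right-hand side has pole order at least $k\ell \geq k$, while the left-hand side is bounded by $2$, giving $k \leq 2$.

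The main obstacle throughout is producing a sufficient supply of \emph{generic} points -- poles of $f$ of order $\geq 2$ for the gap bound, and $b_1$-points outside the exceptional loci for the multiplicity bound -- so that the pointwise inequalities can actually be invoked. One needs the exceptional sets (shifted pole loci, coefficient singularities, critical points of $f$) to have counting density strictly smaller than the quantity against which they are being compared, and this is precisely where subnormality is essential: the Halburd--Korhonen shift estimates keep $\overline{N}(r, f(z\pm1))$ commensurate with $\overline{N}(r, f)$ up to $S(r, f)$, and the second main theorem with rational target functions supplies the required infinitude of $b_1$-points.
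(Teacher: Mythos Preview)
The paper does not itself prove Theorem~B; it is quoted from \cite{Nie-2025} as motivation for the paper's own Theorem~1 (the analogue with $f(z+1)f(z-1)$ in place of $f(z+1)-f(z-1)$). The proof of Theorem~1 in Section~2 is the natural point of comparison, and the methods in \cite{Nie-2025} are of the same type.

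Your bound $\deg_f R\le 7$ is in the same spirit as the paper's Lemma~\ref{lem1}: Valiron--Mohon'ko on the right, shift and logarithmic-derivative estimates on the left. The paper simply invokes $T(r,S(f,z))\le 5T(r,f)+S(r,f)$ (via $T(r,f')\le 2T(r,f)$) together with the difference estimates of \cite{Zhen-2020}, rather than your Laurent bookkeeping, and lands on $7$ directly; either route is fine here.

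For the gap bound $\deg_fP\le\deg_fQ+1$ and the multiplicity bound $k\le 2$, however, your pointwise comparison has a genuine gap. You want a pole of $f$ of order $\ge 2$ (respectively a $b_1$-point of $f$) at which $f(z_0\pm 1)$ are both finite, so that the left side has pole order at most $2$. But the equation itself rules such points out: at a generic $b_1$-point $z_0$ with $f'(z_0)\neq 0$ the Schwarzian is regular while $R(z_0,f(z_0))$ has a pole of order $\ge k$, so $f(z_0+1)-f(z_0-1)$ must have a pole of that order and one of $z_0\pm 1$ is \emph{forced} to be a pole of $f$. The same mechanism operates at every pole of $f$ once $\deg_fP-\deg_fQ\ge 2$. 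Your appeal to ``$\overline N(r,f(z\pm 1))$ commensurate with $\overline N(r,f)$'' therefore misses the point: it is not that the shifted pole locus is small relative to the set of $b_1$-points, but that every generic $b_1$-point \emph{lies in} that locus by virtue of the equation.

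The paper's argument (Lemma~\ref{lem2} and the proof of Theorem~1, and likewise the proof in \cite{Nie-2025}) does not try to find isolated points at all. It \emph{exploits} the linkage: starting from a generic pole or $b_1$-point $z_0$, it iterates the shifted equation along $z_0\pm 1,z_0\pm 2,\ldots$, records the pole multiplicities, and solves the resulting linear recurrence. Either the multiplicities grow like $\bigl((\alpha+\sqrt{\alpha^2-4})/2\bigr)^{d}$ along the chain, which forces $n(r,f)$ to grow at least geometrically and contradicts $\sigma_f=0$ directly (this handles the gap bound), or the chain analysis yields a counting inequality of the form
\[
n\!\left(r,\frac{1}{f-b_1}\right)\le \tau\, n(r+1,f)+O(1),\qquad \tau<1,
\]
which contradicts $\sigma_f=0$ via Lemma~\ref{lema} (this handles $k\le 2$). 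This chain-following/recurrence step, together with the use of Lemma~\ref{lema}, is the essential ingredient missing from your proposal.
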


 Halburd and Korhonen\cite{Halb2-2007} studied the reduction of the equation 
\begin{equation}\label{eqi5}
  f(z+1)f(z-1)=\frac{c_2(f-c_{+})(f-c_{-})}{(f-a_{+})(f-a_{-})},
\end{equation}
where the coefficients are meromorphic functions.  They proved that if \eqref{eqi5} admits an admissible finite-order meromorphic solution with bounded pole multiplicity, then the equation reduces via M$\ddot{o} $bius transformation to canonical forms including difference Painlev$\acute{e} $ III, unless
$f$ is a solution of a difference Riccati equation.
Motivated by these results and Theorem \ref{lemc}, we replace the $f(z+1)-f(z-1)$ in \eqref{eqi3} with $f(z+1)f(z-1)$ and investigated reductions of  the resulting Schwarzian delay equation. Our main theorem follows:  

\begin{theorem}
  Let $f(z)$ be a subnormal transcendental meromorphic solution of the equation 
  \begin{equation}\label{eqm1}
    f(z+1)f(z-1)+a(z)S(f,z)=R(z,f)=\frac{P(z,f)}{Q(z,f)},
  \end{equation}
  where $a(z)$ is rational, $P(z,f)$ and $Q(z,f)$ are coprime polynomials in $f$ with rational coefficients. 
  Then $\deg_fR\leq 7$, and $\deg_fP\leq \deg_fQ+2$. Moreover, if $Q(z,f)$ has a rational function root $b_1$ in $f$ with multiplicity $k$, then $k\leq 2$.  
  
\end{theorem}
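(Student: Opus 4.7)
The plan is to adapt the Nevanlinna-theoretic strategy of Theorem~\ref{lemc} to the multiplicative delay term. Since $\sigma_f=0$, the difference analogue of the logarithmic derivative lemma gives $m(r,f(z\pm 1)/f(z))=S(r,f)$, whence $T(r,f(z\pm 1))=T(r,f)+S(r,f)$; iterating the classical logarithmic derivative lemma yields $m(r,S(f,z))=S(r,f)$. Combined with the Valiron--Mohon'ko identity $T(r,R(z,f))=\deg_f R\cdot T(r,f)+S(r,f)$, these are the main tools. The key difference from Theorem~\ref{lemc} is that the additive estimate $T(r,f(z+1)-f(z-1))\leq 2T(r,f)+S(r,f)$ is now replaced by the multiplicative one $T(r,f(z+1)f(z-1))\leq 2T(r,f)+S(r,f)$; although both have the same leading order, the product enters pole-counting differently and will account for the extra unit in the degree gap $\deg_f P-\deg_f Q$.

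\noindent\textbf{Bounding $\deg_f R$ and the degree gap.} Inserting the estimates above into (\ref{eqm1}) one obtains
\begin{equation*}
\deg_f R\cdot T(r,f)\leq T(r,f(z+1)f(z-1))+T(r,a(z)S(f,z))+S(r,f)\leq 2T(r,f)+N(r,S(f,z))+S(r,f).
\end{equation*}
Every pole of $S(f,z)$ has order at most $2$ and is located either at a pole of $f$ of order $\geq 2$ (the coefficient of $(z-z_0)^{-2}$ at a pole of order $n$ being $(1-n^2)/2$) or at a zero of $f'$. Bounding $\bar N(r,1/f')$ via the second fundamental theorem yields $N(r,S(f,z))\leq 5\,T(r,f)+S(r,f)$, whence $\deg_f R\leq 7$. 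For $\deg_f P\leq \deg_f Q+2$: at a pole $z_0$ of $f$ of order $n$ that is generic, meaning $z_0\pm 1$ are not poles of $f$ and the leading coefficients of $P,Q$ are finite and non-vanishing at $z_0$, the LHS of (\ref{eqm1}) has pole of order at most $2$ (coming from $a(z)S(f,z)$), while the RHS behaves like $c\,f(z)^{\deg_f P-\deg_f Q}$ and hence has pole of order $n(\deg_f P-\deg_f Q)$; since $n\geq 1$, this forces $\deg_f P-\deg_f Q\leq 2$.

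\noindent\textbf{Multiplicity constraint.} Let $b_1(z)$ be a rational root of $Q(z,f)$ in $f$ of multiplicity $k$, so $Q(z,f)=(f-b_1)^k\widetilde Q(z,f)$ with $\widetilde Q(z,b_1(z))\not\equiv 0$. At a generic zero $z_0$ of $f-b_1$ of order $m$ (with $P(z_0,b_1(z_0))\neq 0$ and $z_0\pm 1$ neither poles of $f$ nor zeros of $f-b_1$), the RHS has pole of order $km$ at $z_0$. On the other hand, $f(z+1)f(z-1)$ is finite at $z_0$, while $a(z)S(f,z)$ has pole of order at most $2$ there: when $b_1$ is locally constant at $z_0$, the vanishing of $f-b_1$ to order $m$ makes $f'$ vanish to order $m-1$, producing a double pole of $S(f,z)$; when $b_1'(z_0)\neq 0$ the singularity of $S$ is even weaker. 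Matching pole orders gives $km\leq 2$, so $k\leq 2$.

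\noindent\textbf{Main obstacle.} The principal technical challenge will be converting the preceding pole-matching heuristics into rigorous Nevanlinna averages, by showing that the configurations violating the above genericity assumptions contribute only $S(r,f)$ to the relevant counting functions. This requires a careful combination of the second fundamental theorem, the subnormality $\sigma_f=0$, and a Clunie-type separation of the contributions of the shift operator and the Schwarzian; particular care is needed when $b_1$ is a non-constant rational function, and when chains of poles of $f$ at integer distances interact through the product $f(z+1)f(z-1)$, creating potential cascades that could a priori inflate the Schwarzian contribution on the LHS.
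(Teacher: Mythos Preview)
Your argument for $\deg_fR\leq 7$ is correct and essentially the same as the paper's (Lemma~\ref{lem1}). The serious gap is in the other two parts.

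For both $\deg_fP\leq\deg_fQ+2$ and $k\leq 2$ your local pole-matching is valid \emph{given} your genericity assumption, but that assumption fails precisely under the hypothesis you want to exclude. If $\deg_fP-\deg_fQ\geq 3$ and $z_0$ is any pole of $f$ of order $n$, the RHS of \eqref{eqm1} has a pole of order $\geq 3n\geq 3$ at $z_0$, while $a(z)S(f,z)$ contributes order at most $2$; hence $f(z+1)f(z-1)$ \emph{must} be singular at $z_0$, i.e.\ $z_0+1$ or $z_0-1$ is a pole of $f$. Thus no pole of $f$ is ``generic'' in your sense. The same reasoning shows that if $k\geq 3$, every zero of $f-b_1$ forces a pole of $f$ at a neighbouring integer shift, so no zero of $f-b_1$ is generic either. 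Your proposed resolution---that non-generic configurations contribute only $S(r,f)$---therefore cannot work: in the cases to be ruled out, \emph{all} configurations are non-generic.

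The paper's proof does not try to isolate a single point; it exploits the cascade. For $\deg_fP-\deg_fQ\geq 3$ (so $\alpha:=\deg_fR\geq 3$), starting from a pole at $z_0$ one iterates the shifted equation and tracks the multiplicities $p_d$ at $z_0+d$, which satisfy a linear recurrence with characteristic roots $(\alpha\pm\sqrt{\alpha^2-4})/2$; the dominant root exceeds $1$, giving $n(|z_0|+d,f)\gtrsim(\alpha-1)^{d}$ and hence $\sigma_f\geq\log(\alpha-1)>0$, a contradiction. For $k\geq 3$ the paper again iterates, but now compares counting functions: by following the chain of poles generated by each zero of $f-b_1$ through several shifts, it obtains an inequality of the form $n(r,1/(f-b_1))\leq \tau\, n(r+1,f)+O(1)$ with $\tau<1$ (roughly $\tau=2/k$), and then invokes Lemma~\ref{lema} to contradict $\sigma_f=0$. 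The missing idea in your proposal is this iterative recurrence analysis of pole multiplicities along integer chains; a pointwise comparison cannot close the argument.
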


Below we give some examples to illustrate our results.  
\begin{exa}\label{exa1}
Let $f(z)=e^{\pi z}$. It is easy to check that 
\begin{equation*}
  f(z+1)f(z-1)+S(f,z)=f^2-\frac{\pi ^2}{2}.
\end{equation*}  
Here, $R(z,f)$ is a polynomial in $f$ with multiplicity no more than 2. 
\end{exa}

\begin{exa}\label{exa2}
Let $f(z)=e^{2\pi z}-z$. Then $f$ satisfies 
\begin{align*}
  f(z+1)f(z-1)+&S(f,z)= \frac{-8 \pi ^3[\pi f^2+(2\pi z+1)f+\pi z^2+z]}{(2\pi f+2\pi z-1)^2}+\\
   &[e^{2 \pi}f+(e^{2\pi}-1)z-1][e^{-2\pi}f+(e^{-2\pi}-1)z+1].
\end{align*}
Then $\deg_fP=\deg_fQ+2=4$, $Q(z,f)=(2\pi f+2\pi z-1)^2$, and $2\pi f+2 \pi z-1 $ has no multiple zeros. 
\end{exa}

\begin{exa}\label{exa3}
  It can be deduced that the meromorphic function $f(z)=1/(e^z-1)$ is a solution of the delay Schwarzian equation 
\begin{equation*}
  f(z+1)f(z-1)+S(f,z)=\frac{ef^2}{[(e-1)f+e][(1-e)f+1]}-\frac{1}{2}.
\end{equation*}
Here, $\deg_fP=\deg_fQ=2$. In addition, $f-e/(1-e)$ and $f-1/(e-1)$ have only simple zeros and hence have no multiple zeros. 
\end{exa}

\begin{exa}\label{exa4}
  Suppose that $f(z)= \wp (z+z_0 ; \omega _1 , \omega _2) $ is the Weierstrass elliptic function, where $\omega _1$ and $\omega _2$ are two fundamental periods that are linearly independent over $\mathbb{R} $. 
  Then $f(z)$ solves the equation 
  \begin{equation*}
    f'(z)^2=4f^3(z)-g_2f(z)-g_3,
  \end{equation*}
    where $g_2$ and $g_3$ are constants depending on $\omega _1$ and $\omega _2$. Then the Schwarzian derivative of $f$ is that 
    \begin{equation*}
      S(f,z)=\frac{-48f^4-24g_2f^2-96g_3f-3g_2^2}{32f^3-8g_2f-8g_3}. 
    \end{equation*}
  
    Then we choose the appropriate $g_2$ and $g_3$ such that $f'(1)=0$ and $4z^3-g_2z-g_3=0$ has only simple roots $e_1, e_2, e_3$, see \cite[example 1.1]{Ishi-2017}. 
    According to the addition theorem\cite[Chapter 20.3]{WhittakerBook} and the properties of Weierstrass elliptic functions, $f$ satisfies 
  \begin{align*}
    f(z+1)f(z-1)+S(f,z)&=\left[\frac{-(e_2e_3+e_1^2)-e_1f}{e_1-f}\right]^2\\
  &+\frac{-48f^4-24g_2f^2-96g_3f-3g_2^2}{8(f-e_1)(f-e_2)(f-e_3)}.
  \end{align*}
  Therefore, $\deg_fP=\deg_fQ+1$ and $e_1,e_2$ and $e_3$ are constant.  
  \end{exa}

Then we classify the case $\deg_fP\leq \deg_fQ+2$ when $Q(z,f)$ has only simple roots in $f$. 

  \begin{theorem}
  Let $f(z)$ be a subnormal transcendental meromorphic solution of \eqref{eqm1}. 
  Let $Q(z,f)$ be of the following form: 
  \begin{equation*}
    Q(z,f):=\prod_{i=1}^{n}(f(z)-b_i(z))\hat{Q}(z,f(z)), 
  \end{equation*}
  where the $b_i's(1\leq i\leq n, n\in\mathbb{N} )$ are distinct rational functions, $\hat{Q}(z,f)$ and $f-b_i$ are coprime. \\
$({\mathrm{I}})$ Assume that $ \deg_fP=\deg_fQ+1  $.  We have \\
 $(1)$ if $n=3$, then there exists an integer $i\in\{1,\cdots, n\}$ such that the root $b_i$ is a constant; \\
 $(2)$ if $n=2$, and $f-b_i$ has finitely many double zeros for all $i=1,\cdots, n$, then there exists a root $b_i$ which is non-constant.\\
 $({\mathrm{II}})$Suppose that $\deg_fP=\deg_fQ+2$. We also have \\
 $(3)$ if $n=2$,  then there exists an integer $i\in\{1,\cdots, n\}$ such that the root $b_i$ is a constant; \\
 $({\mathrm{III}})$ Suppose that $\deg_fP\leq \deg_fQ$. For at least two $i \in\{1,\cdots, n\}$, we have \\
$(4)$ if $n\geq 3$ and $b_i$ is non-constant,  then $f-b_i$ has finitely many multiple zeros;\\
$(5)$ if $n\geq 2$ and $b_i$ is constant, then $f-b_i$ has finitely many zeros with multiplicity at least 3. 
\end{theorem}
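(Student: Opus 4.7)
The plan is to analyze equation \eqref{eqm1} locally at zeros of each $f-b_i$ and then assemble the resulting counting information through Nevanlinna's second main theorem, along the same lines as the proof of Theorem 1 but using the sharper information that here every root $b_i$ of $Q$ in $f$ is simple. First I would record the local expansions. At a zero $z_0$ of $f-b_i$ of multiplicity $k$, the right-hand side $R(z,f(z))$ has a pole of order exactly $k$ (since $P$ and $Q$ are coprime and $b_i$ is simple in $Q$), while $f(z+1)f(z-1)$ is finite unless $z_0\pm 1$ is a pole of $f$. The Schwarzian splits into two cases: if $b_i$ is a constant then $f'(z_0)=0$ and expansion gives $S(f,z)=-\frac{k^2-1}{2(z-z_0)^2}+O((z-z_0)^{-1})$ for $k\geq 2$, whereas if $b_i$ is non-constant with $b_i'(z_0)\neq 0$ then $f'(z_0)=b_i'(z_0)\neq 0$ and $S(f,z)$ remains regular at $z_0$. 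This dichotomy is the engine of the whole argument.

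Next I would combine three standard ingredients. The Valiron--Mohon'ko theorem yields $T(r,R(z,f))=\deg_fR\cdot T(r,f)+S(r,f)$; for subnormal $f$ one has $T(r,f(z\pm 1))=T(r,f)+S(r,f)$ and $m(r,S(f,z))=S(r,f)$; and Nevanlinna's second main theorem applied with the $n+1$ targets $b_1,\ldots,b_n,\infty$ gives
\[
(n-1)T(r,f) \leq \overline{N}(r,f) + \sum_{i=1}^n\overline{N}(r,1/(f-b_i)) - N_1(r) + S(r,f).
\]
From the local analysis, every zero of $f-b_i$ at which the Schwarzian cannot match the RHS pole must lie, up to a finite exceptional set, inside the shifted pole set $\{z:f(z+1)=\infty\}\cup\{z:f(z-1)=\infty\}$, whose counting function is at most $2N(r,f)+S(r,f)$.

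For parts (I)(1) and (II)(3), where $\deg_fP=\deg_fQ+1$ or $\deg_fQ+2$, the growth of $R(z,f)$ exceeds what $f(z+1)f(z-1)$ alone can supply, so the Schwarzian must furnish the missing poles; hypothesising that every $b_i$ is non-constant removes that possibility and traps all the multiple zeros of $f-b_i$ into the bounded shifted-pole set, contradicting Valiron--Mohon'ko after the second main theorem is applied. Part (I)(2) is dual in spirit: if both $b_i$ were constant with only simple zeros then the pole of order~$1$ on the RHS has no source on the LHS of comparable contribution, again giving a contradiction. Parts (III)(4) and (III)(5) are finiteness statements; here I would exploit the explicit Schwarzian coefficient $-(k^2-1)/2$: for constant $b_i$, a double zero $k=2$ gives coefficient $-3/2$ which can be absorbed, whereas $k\geq 3$ yields coefficients $-4,-\tfrac{15}{2},\ldots$ which cannot match unless the zero is a shifted pole of $f$; for non-constant $b_i$ any $k\geq 2$ already forces a shifted pole. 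A shifted pole at $z_0\pm 1$ can satisfy $f(z_0)=b_i(z_0)$ for at most two distinct $i$ (one per shift direction), so a pigeonhole over $n\geq 3$ (resp.\ $n\geq 2$) targets upgrades ``at least one'' to ``at least two'' indices.

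The main obstacle will be the book-keeping of the exceptional sets (zeros of $b_i'$ and of $a(z)$, places where $\hat{Q}$ degenerates, or where $P$ and $Q$ pick up unexpected common factors) uniformly in $i$, so that the inequalities coming from the second main theorem really are tight enough to give a contradiction in each of the five sub-cases. Once these exceptional sets are absorbed into $S(r,f)$ and the pigeonhole for shifted poles is in place, the five separate conclusions reduce to simple arithmetic comparisons between $\deg_fP-\deg_fQ\in\{\leq 0,\,1,\,2\}$ and the integer coefficient on the right of the second main theorem.
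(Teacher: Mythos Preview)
Your local analysis of the Schwarzian at a zero of $f-b_i$ is correct and is exactly the starting observation the paper uses. The gap is in the globalisation: the route through Nevanlinna's second main theorem and Valiron--Mohon'ko does not close, and the paper in fact never invokes either of those results in this proof.

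Take the cleanest case, (I)(1) with $n=3$ and all $b_i$ assumed non-constant. Your local step gives that every generic zero $z_0$ of some $f-b_i$ forces a pole of $f$ at $z_0+1$ or $z_0-1$; since one pole can serve at most two such zeros (one from each side), this yields
\[
\sum_{i=1}^{3}\overline N\!\left(r,\tfrac{1}{f-b_i}\right)\le 2\,\overline N(r+1,f)+S(r,f)=2\,\overline N(r,f)+S(r,f).
\]
Feeding this into the second main theorem gives only $2T(r,f)\le 3\overline N(r,f)+S(r,f)\le 3T(r,f)$, which is no contradiction. Nothing improves if you bring in Valiron--Mohon'ko: the equality $T(r,R(z,f))=(\deg_fR)T(r,f)+S(r,f)$ combined with $T(r,f(z\pm1)f(z\mp1))\le 2T(r,f)+S(r,f)$ and $m(r,S(f,z))=S(r,f)$ still leaves the pole contribution $N(r,S(f,z))$ unbounded in terms of anything useful, because $S(f,z)$ has genuine double poles at \emph{every} pole of $f$, not just at the ones adjacent to $b_i$-points. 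The same obstruction appears in each of the remaining cases: the second main theorem controls $\sum\overline N(r,1/(f-b_i))$ from below, your local step controls it from above by a multiple of $N(r,f)$, and the two bounds are never tight enough against each other to contradict subnormality.

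What the paper actually does is replace the second main theorem by Lemma~\ref{lema} (a result of Nie--Huang--Wang--Wu tailored to this situation), which says directly that an inequality
\[
\sum_{i=1}^{n} n\!\left(r,\tfrac{1}{f-b_i}\right)\le n\tau\, n(r+1,f)+O(1),\qquad \tau\in(0,1),
\]
for the \emph{unintegrated} counting functions \emph{with multiplicity} already forces $\sigma_f>0$. To obtain such an inequality with $\tau<1$ the paper does considerably more than observe a single shifted pole: starting from a generic zero $z_0$ of $f-b_i$ it iterates the equation and tracks, case by case, the multiplicities of the poles and zeros of $f$ at $z_0\pm1,z_0\pm2,z_0\pm3,\ldots$, showing that the \emph{total} pole multiplicity generated along these chains dominates the original zero multiplicity by the required factor (for (I)(1) one gets $\sum_i n\le 2\,n(r+1,f)$ hence $\tau=2/3$; for (I)(2) and (III)(5) one gets $\sum_i n\le n(r+1,f)$ hence $\tau=1/2$). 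This multiplicity bookkeeping is the essential content of the proof, and it is precisely what your proposal skips. Your pigeonhole for the ``at least two $i$'' in (III) is likewise not how the paper proceeds; there too the conclusion comes from a counting inequality with $n\ge3$ (resp.\ $n\ge2$) indices on the left and a coefficient $2$ (resp.\ $1$) on the right, so that $\tau=2/n<1$ (resp.\ $1/n<1$).
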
 
\begin{remark}
  Example \ref{exa3} demonstrates that condition (5) in case~$(\mathrm{III})$ of Theorem 2 can be satisfied.  
  Example \ref{exa4} verifies that condition (2) in case~$(\mathrm{I})$ of Theorem 2 can not be omitted. 
\end{remark}

The following example shows that the case  $({\mathrm{II}})$ in Theorem 2 can occur and the roots  $b_i$ can be a constant for all $i\in\{1, \cdots, n\}$. 
\begin{exa}\label{exa5}
  Let $f(z)=\frac{1}{\sin(2\pi z)+\sqrt{2}i}$. Then $f(z)$ satisfies the following delay Schwarzian differential equation 
  \begin{equation*}
    f(z+1)f(z-1)+S(f,z)=f^2+2\pi^2\frac{-2\sqrt{2}if+1}{[(\sqrt{2}i+1)f-1][(\sqrt{2}i-1)f-1]}.
  \end{equation*}
  In this case, $\deg_fP=\deg_fQ+2$, and all roots of $Q(z,f)$ are constants. 
\end{exa}

Now we discuss a special situation when $Q(z,f)$ has a root of multiplicity 2 in the next theorem.

\begin{theorem}
  Let $f(z)$ be a subnormal transcendental meromorphic solution of \eqref{eqm1}. Let $Q(z,f)$ be of the following forms: 
  \begin{equation*}
    Q(z,f):=(f(z)-b_1(z))^2\hat{Q}(z,f(z))
  \end{equation*}
  where $b_1(z)$ is a rational function, $\hat{Q}(z,f)$ and $f-b_1$ are coprime.  If $\deg_fP=\deg_fQ+1 $ and $f-b_1$ has finite double zeros or if $\deg_fP = \deg_fQ+2,$ then $f-b_1$ has infinitely many simple zeros.  
 \end{theorem}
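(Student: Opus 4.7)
The plan is to argue by contradiction: suppose that $f-b_{1}$ has only finitely many simple zeros, and set $g:=f-b_{1}$. Under either hypothesis of the theorem, this assumption forces almost every zero of $g$ to have multiplicity at least~$2$; indeed in the case $\deg_{f}P=\deg_{f}Q+1$ the multiplicity is at least~$3$ for all but finitely many zeros, since both the simple and the double zeros are finite in number. I would then rewrite equation~\eqref{eqm1} in the form
\begin{equation*}
g(z)^{2}\,\hat{Q}(z,f(z))\,\bigl[f(z+1)f(z-1)+a(z)S(f,z)\bigr]=P(z,f(z)),
\end{equation*}
and note that, by coprimality of $P$ and $Q$, $P(z_{0},b_{1}(z_{0}))\neq 0$ at every zero $z_{0}$ of $g$.

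The heart of the argument is a local pole-order analysis at a zero $z_{0}$ of $g$ of multiplicity $m\geq 2$. Since $g^{2}$ has a zero of order $2m\geq 4$ at $z_{0}$ and $\hat{Q}(z,f(z))$ is regular and nonzero there, the bracket $M(z):=f(z+1)f(z-1)+a(z)S(f,z)$ must have a pole of order exactly $2m$ at $z_{0}$. Because the Schwarzian derivative has isolated poles of order at most~$2$, the shift product $f(z+1)f(z-1)$ must supply the remaining pole order, and the exact matching yields $p_{+}(z_{0})+p_{-}(z_{0})=2m(z_{0})$, where $p_{\pm}(z_{0})$ is the pole order of $f$ at $z_{0}\pm 1$ (zero if regular), apart from the finite exceptional set where $a$ has a zero or pole. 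Summing over all multiple zeros of $g$ with $|z_{0}|\leq r$, each pole $w$ of $f$ of order $p_{w}$ contributes $p_{w}$ to the $p_{-}$-sum at $z_{0}=w+1$ and $p_{w}$ to the $p_{+}$-sum at $z_{0}=w-1$. Using the shift invariance $N(r,f(z\pm 1))=N(r,f)+S(r,f)$ valid for subnormal meromorphic functions (Halburd-Korhonen / Chiang-Feng), I obtain
\begin{equation*}
\sum_{\text{multiple zero }z_{0}\text{ of }g,\,|z_{0}|\leq r}2m(z_{0})\leq 2N(r,f)+S(r,f),
\end{equation*}
which, combined with $n_{1}(r,1/g)=O(1)$, gives the sharp bound $N(r,1/g)\leq N(r,f)+S(r,f)$ and in particular $\bar{N}(r,1/g)\leq 2\bar{N}(r,f)+S(r,f)$.

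Finally I would close the argument by applying Nevanlinna's second main theorem to $f$ with the small target $b_{1}$ and auxiliary small functions, together with the Valiron-Mohon'ko identity $T(r,R(z,f))=\deg_{f}P\cdot T(r,f)+S(r,f)$ applied to~\eqref{eqm1}, the shift identities $T(r,f(z\pm 1))=T(r,f)+S(r,f)$, and the estimate $T(r,S(f,z))=O(T(r,f))$, to extract a growth contradiction. This last step is the main obstacle: turning the counting inequality $\bar{N}(r,1/g)\leq 2\bar{N}(r,f)+S(r,f)$ into a genuine contradiction requires delicate choices of auxiliary targets and exploiting the precise degree gap $\deg_{f}P-\deg_{f}Q\in\{1,2\}$. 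In the case $\deg_{f}P=\deg_{f}Q+1$ the improved lower bound $N(r,1/g)\geq 3\bar{N}(r,1/g)-O(1)$ (provided by the finiteness of double zeros) sharpens the Nevanlinna inequality enough to close the argument, whereas in the case $\deg_{f}P=\deg_{f}Q+2$ the higher-degree right-hand side of~\eqref{eqm1} already generates sufficient tension with the bound on $\bar{N}(r,1/g)$ to produce the contradiction.
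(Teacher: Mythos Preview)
Your setup is right---assume $f-b_1$ has only finitely many simple zeros, localise at a generic multiple zero $z_0$ of $g=f-b_1$, and read off pole orders of $f$ at $z_0\pm1$ from the equation. The problem is that the counting inequality you extract is too weak to close. From $p_+(z_0)+p_-(z_0)\geq 2m(z_0)$ (this is an inequality, not an equality: $f$ may vanish at $z_0\pm1$) together with the observation that each pole of $f$ is adjacent to at most two $b_1$-points, you obtain only
\[
n\!\left(r,\frac{1}{f-b_1}\right)\leq n(r+1,f)+O(1),
\]
that is, $\tau=1$ in the hypothesis of Lemma~\ref{lema}. That lemma requires $\tau<1$ strictly, so it does not apply. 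Your proposed fallback---closing via the second main theorem and the Valiron--Mohon'ko identity---is left vague, and in fact cannot work as stated: combining $N(r,1/(f-b_1))\leq N(r,f)+S(r,f)$ with the standard identity $N(r,1/(f-b_1))=T(r,f)+S(r,f)$ (which does hold here) yields only $m(r,f)=S(r,f)$, which is no contradiction. The refinement $N(r,1/g)\geq 3\bar N(r,1/g)-O(1)$ in the case $\deg_fP=\deg_fQ+1$ does not change this; it merely bounds $\bar N(r,1/g)$ by $\tfrac13 N(r,f)$, still consistent with subnormality.

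What is missing is iteration of the \emph{shifted} equation. The paper shows that a generic multiple zero $z_0$ of $f-b_1$ forces poles not only at $z_0\pm1$ but, after substituting into the equation shifted by $\pm1$, also at $z_0\pm2$, with multiplicities comparable to $2p$. In the case $\deg_fP=\deg_fQ+2$ the cascade is even stronger: a pole of order $q$ at $z_0+1$ forces a pole of order roughly $2q$ at $z_0+2$, and the chain never terminates. Tracking carefully how many \emph{new} $b_1$-points can occur along such a chain, one arrives at
\[
n\!\left(r,\frac{1}{f-b_1}\right)\leq \frac{1}{2}\,n(r+1,f)+O(1),
\]
and now $\tau=\tfrac12<1$, so Lemma~\ref{lema} gives $\sigma_f>0$ directly. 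Your one-step neighbour analysis captures half the mechanism; the other half---propagating poles one more step and controlling double counting along the chain---is where the factor $\tfrac12$ comes from, and it is essential.
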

 \begin{remark}
  Example \ref{exa2} can shows the case of $\deg_fP=\deg_fQ+2$ in Theorem 3 can occur. 
 \end{remark}

\section{Proof of Theorem 1}

To prove Theorem 1, we first recall the necessary concepts and lemmas. A differential-difference polynomial in $f(z)$ is defined by 
\begin{equation*}
  P(z,f)=\sum_{l\in L} b_l(z)f(z)^{l_{0,0}}f(z+c_1)^{l_{1,0}}\cdots f(z+c_{\nu })^{l_{\nu ,0}}f'(z)^{l_{0,1}}\cdots f^{(\mu )}(z+c_{\nu })^{l_{\nu , \mu }},
\end{equation*}
where $c_1,\cdots, c_{\nu }$ are distinct complex non-zero constants, $L$ is a finite index set consisting of elements of the form $l=(l_{0,0}, \cdots,l_{\nu ,\mu })$ and coefficients $b_l$ are meromorphic functions for all $l\in L$. 
Nie, Huang , Wang and Wu modified the results of Halburd and Korhonen\cite[Lemma 2.1]{Halb-2017} and gave the following lemma\cite[Lemma 2.3]{Nie-2025}. 

\begin{lemma}\label{lema}
  Let $f(z)$ be a transcendental meromorphic solution of 
  \begin{equation*}
    P(z,f)=0,
  \end{equation*}
  where $P(z,f)$ is a differential-difference polynomial in $f$ with meromorphic coefficients $b_l(z)$ satisfying $m(r,b_l)=S(r,f)$. Let $a_1, \cdots , a_k$ be small functions to $f$ such that 
  $P(z,a_i)\not \equiv  0$ is also a small function of $f$, for all $i \in \{1, \cdots , k\}$. If there exists $s>0$ and $\tau \in (0, 1)$ such that 
  \begin{equation*}
    \sum_{i=1}^{k}n\left(r,\frac{1}{f-a_i}\right)\leq k \tau n(r+s, f)+O(1),
  \end{equation*}  
  then $\sigma_f>0$, where $\sigma_f$ is defined as \eqref{defsub}.
\end{lemma}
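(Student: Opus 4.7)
The plan is to prove the contrapositive: assuming $f$ is subnormal (i.e., $\sigma_f=0$, so $T(r,f)=e^{o(r)}$), we derive a contradiction with the counting hypothesis. The strategy follows the ``pole-chasing'' method used by Halburd and Korhonen in the original Lemma 2.1 of \cite{Halb-2017}, with the growth condition $\rho_2(f)<1$ replaced throughout by subnormality. Note that subnormality implies $n(r,f)=e^{o(r)}$ and that, for any fixed $c$, one has $T(r+c,f)=(1+o(1))T(r,f)$, which is exactly the regularity needed to run the chasing argument.

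The first main step is a pole-propagation analysis. At a pole $z_0$ of $f$ of multiplicity $p$, the identity $P(z,f)\equiv 0$ forces the leading Laurent coefficients of the differential-difference polynomial to cancel. Since the coefficients $b_l$ satisfy $m(r,b_l)=S(r,f)$ and since $P(z,a_i)\not\equiv 0$ is a small function of $f$, this cancellation can, outside an exceptional set, only be achieved by either (a) $f$ having a pole at one of the shifted points $z_0+c_j$, or (b) $f(z_0)$ or $f(z_0+c_j)$ taking a value $a_i$ for some $i$. The exceptional set — stemming from zeros/poles of coefficients, of $P(z,a_i)$, and of the small functions $a_i$ themselves — has counting function bounded by $O(\log r)+S(r,f)$, hence is absorbed into the error term. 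This converts into a pointwise-to-counting inequality of the form
\begin{equation*}
n(r+s_1,f)\;\geq\;\gamma\, n(r,f)-C\sum_{i=1}^{k} n(r+s_1,\tfrac{1}{f-a_i})-O(\log r),
\end{equation*}
with constants $s_1>0$, $\gamma>1$, $C>0$ determined by the shift configuration $\{c_1,\dots,c_\nu\}$ and the degree of $P$ in the variables $f(z),f(z+c_j)$.

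The second main step combines this with the hypothesis. Substituting $\sum_i n(r,1/(f-a_i))\leq k\tau n(r+s,f)+O(1)$ with $\tau\in(0,1)$ yields a recursion of the shape
\begin{equation*}
n(r+s_1,f)\;\geq\;\gamma\, n(r,f)-Ck\tau\, n(r+s_1+s,f)-O(\log r).
\end{equation*}
Using the subnormal estimate $n(r+\eta,f)=(1+o(1))n(r,f)$ for any fixed shift $\eta$ (valid because $T(r+\eta,f)=(1+o(1))T(r,f)$ under $\sigma_f=0$), this collapses to $(1+o(1))(1+Ck\tau)n(r+s_1,f)\geq \gamma n(r,f)$, and by iterating along arithmetic progressions of step $s_1$ we conclude that $n(r,f)$ grows at least like $\lambda^{r/s_1}$ for some $\lambda>1$. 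This contradicts $n(r,f)=e^{o(r)}$, completing the proof.

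The main obstacle is Step~1: rigorously showing that the exceptional set for pole propagation is controlled entirely by $\sum_i n(r,1/(f-a_i))$ and $S(r,f)$-type error terms. This requires a careful bookkeeping of all algebraic identities that can occur at a pole — including coincident poles of several shifts and higher-multiplicity degeneracies in the polynomial $P$ — together with a sharp use of the hypothesis $P(z,a_i)\not\equiv 0$ to rule out degenerate balances along large sequences of points. Once this pole-propagation bound is in place, the recursion-to-exponential-growth step is routine.
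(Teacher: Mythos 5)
The paper does not prove this lemma at all: it is imported (with the growth hypothesis adjusted) from \cite[Lemma 2.3]{Nie-2025}, which in turn adapts \cite[Lemma 2.1]{Halb-2017}, and the proof in those sources is entirely different from yours. It runs: assume $\sigma_f=0$ for contradiction; then the difference analogue of the lemma on the logarithmic derivative is available, and the Clunie--Mohon'ko argument applied to $P(z,f)=0$ together with $P(z,a_i)\not\equiv 0$ gives $m(r,1/(f-a_i))=S(r,f)$, hence $N(r,1/(f-a_i))=T(r,f)+S(r,f)$ by the first main theorem. Integrating the counting hypothesis from a large base radius $r_0$ (so that the factor $\tau(r_0+s)/r_0$ stays below $1$) and using $T(r+s,f)=(1+o(1))T(r,f)$ for subnormal $f$ yields $kT(r,f)\le k\tau' T(r,f)+S(r,f)$ with $\tau'<1$, a contradiction. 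No local analysis at poles is involved.

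Your Step 1 is a genuine gap, and I do not believe it can be repaired. For a general differential-difference polynomial there is no pole-propagation inequality $n(r+s_1,f)\ge\gamma\,n(r,f)-C\sum_i n\bigl(r+s_1,1/(f-a_i)\bigr)-O(\log r)$ with $\gamma>1$: take $P(z,f)=f(z+1)-f(z-1)-b(z)$, where a pole of $f$ forces a pole of the \emph{same} multiplicity at exactly one shifted point, so the best available factor is $\gamma=1$ and your recursion yields nothing. Your alternative (b) is also unmotivated: when leading Laurent coefficients cancel at a pole, the values of $f$ at shifted points that permit cancellation are roots of algebraic relations determined by $P$, and there is no reason for these to coincide with the prescribed small functions $a_1,\dots,a_k$, whose only role in the lemma is through the nonvanishing of $P(z,a_i)$ in the Clunie--Mohon'ko step. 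Finally, the collapse in your Step 2 uses $n(r+\eta,f)=(1+o(1))n(r,f)$, which does not follow from subnormality: $n$ is a step function, and $T(r+\eta,f)\sim T(r,f)$ controls only the integrated counting functions. The pole-chasing method you describe is the one this paper uses later for the specific equation \eqref{eqm1} when $\deg_fR\ge 3$, where the degree of $R$ genuinely forces multiplicities to grow geometrically; it is not the mechanism behind this general lemma.
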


We present the following lemma, which estimates the degree of $R(z,f)$. 

  \begin{lemma}\label{lem1}
    Let $f(z)$ be a subnormal transcendental meromorphic solution of \eqref{eqm1}. Then $\deg_fR\leq 7$. Furthermore, if $R(z,f(z))$ is a polynomial in $f$ 
    with coefficients in $z$, then $\deg_f R\leq 2$.
  \end{lemma}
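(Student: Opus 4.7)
The plan is to balance the Nevanlinna characteristic on the two sides of equation \eqref{eqm1}.

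On the right, since $R(z,f)$ is rational in $f$ with rational coefficients, the Valiron--Mohon'ko theorem gives $T(r, R(z,f)) = \deg_f R \cdot T(r,f) + S(r,f)$. On the left, I would handle the two pieces separately. The shift product is controlled by the subnormal shift lemma of Halburd--Korhonen/Chiang--Feng: $T(r, f(z\pm 1)) = T(r,f) + S(r,f)$, hence $T(r, f(z+1)f(z-1)) \leq 2T(r,f) + S(r,f)$. For the Schwarzian term, the proximity function satisfies $m(r, S(f,z)) = S(r,f)$ by the logarithmic derivative lemma applied to $S(f,z) = (f''/f')' - \frac{1}{2}(f''/f')^2$. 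The counting function needs a careful pole analysis: a direct Laurent expansion shows that the two order-two singular contributions cancel at simple poles of $f$, so $S(f,z)$ has at most a simple pole there, while at multiple poles of $f$ and at zeros of $f'$ off the poles of $f$ the function $S(f,z)$ has a pole of order exactly two. Summing, and estimating $\bar{N}(r, 1/f')$ through $T(r, f') \leq T(r,f) + \bar{N}(r,f) + S(r,f)$, I obtain $T(r, a(z)S(f,z)) \leq 5T(r,f) + S(r,f)$. Combining, $\deg_f R \cdot T(r,f) \leq 7T(r,f) + S(r,f)$, forcing $\deg_f R \leq 7$.

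For the sharper bound $\deg_f R \leq 2$ when $R$ is polynomial in $f$, I would switch from characteristic to proximity functions. When $R$ has no denominator in $f$, every pole of $R(z,f(z))$ is already a pole of $f$, so the Valiron--Mohon'ko identity refines to $m(r, R(z,f)) = \deg_f R \cdot m(r,f) + S(r,f)$. On the LHS, the shift lemma still gives $m(r, f(z+1)f(z-1)) \leq 2 m(r,f) + S(r,f)$ and the Schwarzian contributes $m(r, a(z)S(f,z)) = S(r,f)$. This yields $(\deg_f R - 2)\,m(r,f) \leq S(r,f)$, so $\deg_f R \leq 2$ whenever $m(r,f)$ is not itself $S(r,f)$. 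The residual pole-dominant case $m(r,f) = S(r,f)$ would be disposed of by applying Lemma~\ref{lema} with the roots of $R(z,f) = 0$ in $f$ as test small functions: the auxiliary differential-difference polynomial $P(z,f) := f(z+1)f(z-1) + a(z)S(f,z) - R(z,f)$ evaluated at each such root is generically a nonzero small function of $f$, and subnormality then contradicts the sparse-$a_i$-points regime that $\deg_f R \geq 3$ would require.

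The main obstacle is the tight Schwarzian counting estimate $N(r, S(f,z)) \leq 5T(r,f) + S(r,f)$: the naive bound that treats simple and multiple poles of $f$ on the same footing yields only $\deg_f R \leq 8$, and one genuinely needs the specific cancellation at simple poles of $f$ to land on the claimed $7$. The secondary obstacle is handling the residual pole-dominant configuration in the polynomial bound, where Lemma~\ref{lema} is the decisive tool.
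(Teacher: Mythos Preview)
Your argument for $\deg_f R\le 7$ is essentially the paper's: Valiron--Mohon'ko on the right, the subnormal shift estimate plus $T(r,S(f,z))\le 5T(r,f)+S(r,f)$ on the left. Your explicit Laurent analysis of $S(f,z)$---regular at simple poles of $f$, a double pole at multiple poles of $f$ and at zeros of $f'$---is exactly what underlies the constant~$5$, which the paper obtains by citing $T(r,f')\le 2T(r,f)+S(r,f)$ without spelling out the pole count.

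For the polynomial bound your route diverges from the paper and has a genuine gap in the residual step. The proximity-function reduction is correct and in fact subsumes the paper's Case~1 (finitely many poles): from $m(r,R(z,f))=d\,m(r,f)+S(r,f)$ and $m(r,f(z+1)f(z-1)+a(z)S(f,z))\le 2m(r,f)+S(r,f)$ you get $(d-2)m(r,f)\le S(r,f)$, so $d\ge 3$ forces $m(r,f)=S(r,f)$. The gap is your proposed disposal of this pole-dominant case via Lemma~\ref{lema} with the roots $a_i$ of $R(z,f)=0$. At a point with $f(z_0)=a_i(z_0)$ the equation gives only $f(z_0+1)f(z_0-1)=-a(z_0)S(f,z_0)$, which forces \emph{no} poles of $f$ at $z_0\pm 1$; there is thus no mechanism to verify the hypothesis $\sum_i n(r,1/(f-a_i))\le k\tau\,n(r+s,f)$ that Lemma~\ref{lema} requires. (The mechanism in the rest of the paper---a zero of $f-b_i$ forcing a pole of $R$ and hence of $f$ at a shift---works precisely because $b_i$ is a root of the \emph{denominator} $Q$; here there is no denominator.) A secondary issue is that the $a_i$ are in general only algebraic over $\mathbb{C}(z)$, not meromorphic.

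The paper's treatment of this case (``$f$ has infinitely many poles'') is entirely different: a pole-multiplicity iteration. Start at a generic pole of $f$ of order $t$; since $R$ is a polynomial in $f$ of degree $\alpha\ge 3$, the right side has a pole of order $\alpha t\ge 3$ while $S(f,z)$ has at most a double pole, so $f(z+1)f(z-1)$ has a pole of order $\alpha t$. Shifting the equation and tracking the induced linear recurrence on pole orders shows that along one of the progressions $z_0\pm d$ the order grows like $\bigl((\alpha+\sqrt{\alpha^2-4})/2\bigr)^{d}$, hence $n(|z_0|+d,f)\ge(\alpha-1)^{d-2}+O(1)$ for large $d$ and $\limsup_{r\to\infty}r^{-1}\log n(r,f)\ge\log(\alpha-1)>0$, contradicting subnormality. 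You need this iteration---or some equivalent growth argument on poles of $f$, not on $a_i$-points---to close the proof.
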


  \begin{proof}
    Taking the Nevanlinna characteristic function of both sides of \eqref{eqm1} and using \cite[Theorem~2.25]{LaineBook} we have
    \begin{align*}
      (\deg_fR)T(r,f) &= T(r,R(z,f))+O(\log r) \\
      &=T(r,f(z+1)f(z-1)+a(z)S(f,z))+ O(\log r) .
    \end{align*}
    From $T(r,f')\leq 2T(r,f)+S(r,f)$, \eqref{eqm1} and \cite[Theorem 1.2]{Zhen-2020}, we have
    \begin{align*}
      (\deg_fR)T(r,f) &\leq T(r,f(z+1)f(z-1))+T(r,S(f,z))+O(\log r)\\
      &\leq m\left(r,\frac{f(z+1)}{f(z)}\right)+m\left(r,\frac{f(z-1)}{f(z)}\right)+2m(r,f)\\
      &+N(r,f(z+1)f(z-1))+5T(r,f)+S(r,f)\\
      &\leq 7T(r,f)+S(r,f).
    \end{align*} 
    where $S(r,f)$ denotes any quantity satisfying $S(r,f)=o(T(r,f))$ as $r\rightarrow \infty$ and $r\not \in A_1$,  $A_1$ is a set with zero upper-density measure. 

    This gives the first conclusion of Lemma \ref{lem1}. Now we suppose $R$ is a polynomial in $f$ with rational coefficients. We will consider the following two cases.

    \textbf{Case 1} Suppose that $f(z)$ has at most finitely many poles. Since $R$ is a polynomial in $f$. From \eqref{eqm1}, we know that $N(r,S(f,z))=O (\log r)$. Then from the lemma on the logarithmic derivative we have  $T(r,S(f,z))=S(r,f)$ and   
    \begin{align*}
      (\deg_fR)T(r,f)&\leq T(r,f(z+1)f(z-1))+S(r,f)\\
      &\leq m\left(r,\frac{f(z+1)}{f(z)}\right)+m\left(r,\frac{f(z-1)}{f(z)}\right)+2m(r,f)+S(r,f)\\
      &\leq 2T(r,f)+S(r,f),
    \end{align*}
    where $r\not \in A_2$ and $A_2$ is a set with zero upper-density measure. So $\deg_fR\leq2$.

   \textbf{Case 2} Assume that $f(z)$ has infinitely many poles and we suppose that $\deg_fR\geq 3$.

   From conditions we know that the coefficients of $R(z,f)$ and $a(z)$ are rational, so they have finitely many zeros and poles. Therefore, there exists a constant $M \in \mathbb{R} $ such that all zeros and poles of $a(z)$ and the coefficients of $R(z,f)$ lie in a disk $|z|<M$ where $M\in \mathbb{R} ^{+}$.
   As $f(z)$ has infinitely many poles, there exists a point $z_0$ such that $|z_0|>M_1$, where $M_1$ is a positive constant such that $M_1>M$ and $z_0\pm n \in \{z: |z|>M_1\}$ for finite number $n\in \mathbb{Z} $. Let $z_0$ be a pole of $f(z)$ with multiplicity $t$. 
   Then $z_0$ is either a regular point or a double pole of $S(f,z)$. Since $R$ is a polynomial in $f$ and $\deg_fR\geq 3$, and we denote $\deg_fR=\alpha $ for convenience, it follows from \eqref{eqm1} that $f(z)$ may has poles at $z_0+1$ or $z_0-1$ and the number of the multiplicities of the poles is more than $t \alpha $.  We may assume that $f(z)$ takes a pole at $z_0+1$ with multiplicity $p_1$ and $z_0-1$ is a pole of $f$ with multiplicity $p_2$, where $p_1+p_2=t\alpha $. 
   
   \textbf{Subcase 2.1} Suppose that $p_1\alpha -t\leq 0$, then we can know that $p_1\leq \frac{t}{\alpha }$ and $p_2\geq \frac{t( \alpha ^2-1)}{ \alpha }\geq 2$. 
   From shifting \eqref{eqm1}, we have 
   \begin{equation*}
    f(z+2)f(z)+a(z+1)S(f,z+1)=R(z+1, f(z+1)),
   \end{equation*}
   from which it follows that $f(z_0+2)$ is finite. 
   
   If $0 \geq p_1 \alpha -t >-2$, $z_0+3$ is a zero of $f$ with multiplicity $p_1$. From the iteration of shifting \eqref{eqm1}, we find that there may be no poles of $f$ in the set $\{z_0+d\}$, where $d>3$ is an integer. 
   Now we calculate the multiplicities of poles of $f$ in the set $\{z_0-n\}$, where $n$ is a positive integer.  From the shifting of \eqref{eqm1}, similarly, we find $z_0-2$ is a pole of $f$ with multiplicity $t(\alpha^2 -1)-p_1 \alpha \geq 1 $.  We find that $z_0 -3$ is the pole of $f$ with multiplicity $t(\alpha ^3-2\alpha )+p_1(-\alpha ^2+1)\geq 1$ and  
  $z_0-4$ is a pole of $f$ with multiplicity $t(\alpha ^4-3\alpha ^2+1)+p_1(-\alpha ^3+2\alpha) \geq 1$. From the recurrence relation  we know that $z_0-d(d \in \mathbb{N} )$ is a pole of $f$, and the multiplicity of the pole of $f$ at $z_0-d$ is $\frac{2p_2 - t(\alpha  -\sqrt{\alpha ^2-4} )}{2\sqrt{\alpha ^2-4} }\cdot \left(\frac{\alpha +\sqrt{\alpha ^2-4} }{2}\right)^{d-1}+O\left(1\right)$. 
  Thus we have  
   \begin{equation*}
    n(d+|z_0|,f) \geq \left(\alpha-1\right)^{d-2}+O(1)
   \end{equation*}
   for sufficiently large $d \in \mathbb{N} $, then 
   \begin{align}\label{eqm5}
    \nonumber 0=\limsup_{r\rightarrow \infty} \frac{\log^+T(r,f)}{r}&\geq \limsup_{r\rightarrow \infty} \frac{\log n(r,f)}{r}\\
    \nonumber &\geq \limsup_{d\rightarrow \infty}\frac{\log n(d+|z_0|,f)}{d+|z_0|}\\
    &\geq \limsup_{d\rightarrow \infty}\frac{\log (\alpha -1)^{d-2}}{d+|z_0|}\\
    \nonumber &=\log (\alpha -1)\\
    \nonumber &\geq \log 2>0
   \end{align}
which is a contradiction. If $\alpha -t\leq -2$, $z_0+3$ is a simple pole of $f$ when $p_1=1$ or $f(z_0+3)$ is finite when $p_1\geq 2$. The number of multiplicities of poles of $f$ in the set $\{z_0-d, \cdots, z_0+d\}$ is more than the case when $0\geq \alpha -t>-2$. Similarly, we have the same conclusion. 

\textbf{Subcase 2.2} Assume that $p_1\alpha -t\geq 1$, then $p_1\geq \frac{t+1}{\alpha }$ and $z_0+2$ is a pole with multiplicity $p_1\alpha-t$. Similarly, from continuing the iteration of shifting of \eqref{eqm1}, we find that $z_0+3$ is a pole of $f$ with multiplicity $p_1\alpha ^2-t\alpha -p_1\geq 1$ and $z_0+4$ is a pole of $f$ with multiplicity $p_1\alpha ^3-t\alpha ^2-2p_1\alpha +t\geq 1$. 
If $2p_1-t(\alpha -\sqrt{\alpha ^2-4})>0$, from the recurrence relation again, we know that $z_0+d$ is a pole of $f$ and the multiplicity of the pole of $f$ at $z_0-d$ is $\frac{2p_1-t(\alpha -\sqrt{\alpha ^2-4}) }{2\sqrt{\alpha ^2-4} }\cdot \left(\frac{\alpha +\sqrt{\alpha ^2-4} }{2}\right)^{d-1}+O\left(1\right)$. If $2p_1-t(\alpha -\sqrt{\alpha ^2-4})\leq 0$, then we have $2p_2-t(\alpha -\sqrt{\alpha ^2-4})>0$. Similar to the subcase 2.1 we can have a contradiction again. 
Hence we complete the proof.
\end{proof}

Now we will consider the case when the polynomial $Q(z,f)$ has a rational function root in $f$.

\begin{lemma}\label{lem2}
  Let $f(z)$ be a subnormal transcendental meromorphic solution of \eqref{eqm1}. Let $R(z,f)$ be of the following form:
  \begin{equation}\label{eqm2}
    R(z,f(z)) := \frac{P(z,f(z))}{\left(f(z)-b_1(z)\right)^k\hat{Q}(z,f(z))}
  \end{equation}
  where $k$ is a positive integer, $b_1$ is a rational function, $P(z,f)$ and $\hat{Q}(z,f)$ are polynomials in $f$ with rational coefficients, then $k\leq 2$.
\end{lemma}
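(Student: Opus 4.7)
The plan is to assume, for contradiction, that $k\ge 3$ and derive $\sigma_f>0$ by invoking Lemma~\ref{lema}, contradicting the subnormality of $f$. The argument rests on a pole-counting estimate at the zeros of $f-b_1$, which produces an inequality of the form $n(r,1/(f-b_1))\le\tau\,n(r+1,f)+O(1)$ with $\tau<1$.

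First I would express \eqref{eqm1} as a differential-difference polynomial equation in $f$. Multiplying through by $Q(z,f)\bigl(f'(z)\bigr)^2$ and invoking the identity $(f')^2 S(f,z)=f'f'''-\tfrac{3}{2}(f'')^2$ gives
\[
\tilde{P}(z,f):=(f')^2 Q(z,f)\,f(z+1)f(z-1)+a(z)\bigl[f'f'''-\tfrac{3}{2}(f'')^2\bigr]Q(z,f)-(f')^2 P(z,f)=0.
\]
Since $(f-b_1)^k\mid Q(z,f)$, the formal substitution $f=b_1$ annihilates the first two terms and yields $\tilde{P}(z,b_1)=-(b_1')^2 P(z,b_1)$. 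By the coprimality of $P$ and $Q$ in $f$ we have $P(z,b_1)\not\equiv 0$, so for non-constant $b_1$ this is a non-zero rational function, and hence a small function of $f$, verifying the hypothesis of Lemma~\ref{lema}.

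Next I perform a local analysis at a zero $z_0$ of $f-b_1$ of multiplicity $m$, chosen outside the finite exceptional set where $a(z)$, $b_1(z)$, or the coefficients of $P$ and $\hat{Q}$ behave singularly. Writing $f(z)=b_1(z)+(z-z_0)^m g(z)$ with $g(z_0)\ne 0$, the right-hand side of \eqref{eqm1} has a pole of order exactly $mk$ at $z_0$. A standard Laurent computation shows that $S(f,z)$ is finite at $z_0$ when $b_1$ is non-constant, while for $b_1$ constant and $m\ge 2$ it has a pole of order $2$ with leading coefficient $-(m^2-1)/2$. Because $k\ge 3$ forces $mk\ge 3>2$, the Schwarzian contribution is always subordinate, and the balance of pole orders compels $t_++t_-=mk$, where $t_\pm$ denote the orders of poles of $f$ at $z_0\pm 1$. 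Summing over all zeros of $f-b_1$ in $|z|\le r$, and observing that each pole of $f$ in $|z|\le r+1$ appears as a $+1$ shift of at most one zero and as a $-1$ shift of at most one zero, we obtain
\[
k\,n\bigl(r,1/(f-b_1)\bigr)=\sum_i m_i k=\sum_i (t_i^++t_i^-)\le 2\,n(r+1,f)+O(1),
\]
so $n(r,1/(f-b_1))\le (2/k)\,n(r+1,f)+O(1)$ with $2/k\le 2/3<1$. Applying Lemma~\ref{lema} with a single test function $a_1=b_1$, $s=1$, and $\tau=2/k$ then yields $\sigma_f>0$, the desired contradiction; hence $k\le 2$.

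The main obstacle is the case of constant $b_1\equiv c$: then $b_1'\equiv 0$ forces $\tilde{P}(z,b_1)\equiv 0$, and the polynomial constructed above no longer meets the hypothesis of Lemma~\ref{lema}. I would handle this case either by refining the choice of differential-difference polynomial (for instance by combining \eqref{eqm1} with a suitable shift or derivative so that the substitution $f=c$ produces a non-zero remainder), or by arguing directly from the same pole-counting inequality via a defect or second main theorem argument adapted to subnormal meromorphic functions.
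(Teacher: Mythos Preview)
Your pole-counting argument is cleaner than the paper's: where the paper runs a lengthy iterative case analysis tracking poles at $z_0\pm 2,z_0\pm 3,\ldots$ across eight subcases (split further according to whether $\deg_fP\le\deg_fQ$ or not), your direct observation that $t_++t_-=mk$ at each generic $b_1$-point, together with the remark that the shifted points $z_i+1$ (respectively $z_i-1$) are pairwise distinct, already delivers $n(r,1/(f-b_1))\le(2/k)\,n(r+1,f)+O(1)$. The paper eventually arrives at $1/k$ or $2/k$ depending on the subcase, but either constant is below $1$ once $k\ge 3$, so your shortcut loses nothing for the purposes of Lemma~\ref{lema}.

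The gap you flag at constant $b_1$, however, is real, and your proposed workarounds are too vague to count as a proof. The paper closes it by a simple device you should adopt: do \emph{not} multiply through by $(f')^2$. Instead treat $\hat a(z):=a(z)S(f,z)$ as a fixed meromorphic \emph{coefficient} (legitimate, since $m(r,\hat a)=S(r,f)$ by the logarithmic derivative lemma applied to $f'''/f'$ and $f''/f'$) and set
\[
\Psi(z,w):=(w-b_1)^k\hat Q(z,w)\bigl(w(z+1)w(z-1)+\hat a(z)\bigr)-P(z,w).
\]
This is a pure \emph{difference} polynomial in $w$ with small coefficients; $\Psi(z,f)=0$ follows from \eqref{eqm1}, and the substitution $w=b_1$ kills the first term via the factor $(w-b_1)^k$, giving $\Psi(z,b_1)=-P(z,b_1)$, a nonzero rational function regardless of whether $b_1$ is constant. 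This verifies the hypothesis of Lemma~\ref{lema} uniformly, and combined with your pole-counting inequality the proof is complete.
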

  \begin{proof}
Suppose a differential-difference polynomial
\begin{equation*}
  \Psi (z,w) := (w-b_1)^k\hat{Q}(z,w)(w(z+1)w(z-1)+\hat{a}(z))-P(z,w),
\end{equation*}    
where $k\geq1$, and $\hat{a}(z):=a(z)S(w,z)$ satisfies $m(r,\hat{a})=S(r,w)$. 
Clearly, from \eqref{eqm1} and \eqref{eqm2}, we have  $\Psi (z,f) =0$. 
Since $\Psi (z,b_1)\neq 0$ is rational, the first condition of Lemma \ref{lema} is satisfied 
  for $b_1$. By similar to the proof of \cite[Lemma 2.3]{Nie-2025}, we have 
  \begin{equation*}
    N\left(r,\frac{1}{f-b_1}\right)=T(r,f)+S(r,f).
  \end{equation*} 
      
Now we consider the multiplicities of zeros of $f-b_1$ and poles of $f$. We assume that $z_0$ is a zero (or pole) of $f-b_1$ with multiplicity $p(\geq 1)$ and that none of $a(z), b_1(z)$ and any 
rational coefficients in $R(z,f)$ have a zero or pole at $z_0$. We can also require that for finite number $n\in \mathbb{Z}$, the shifting points $z_0+n$ are not the zeros 
or poles of those rational coefficient functions including $a(z)$ and $b_1(z)$.  
We call such a point $z_0$ a generic zero (or generic pole) of $f-b_1$ with multiplicity $p$, see in \cite[Lemma 3.2]{Nie-2025}.  

Assume that $k\geq 3$.  Since $k\geq 3$, then $kp\geq 3p>2$. Thus, $f(z+1)f(z-1)$ has a pole 
with multiplicity $kp(\geq 3)$ at $z_0$ from \eqref{eqm1} and \eqref{eqm2}. 

\textbf{Case 1} Assume that
\begin{equation*}
  \deg_f(P)\leq k+\deg_f(\hat{Q}).
\end{equation*}

\textbf{Subcase 1.1} 
Suppose the set $E_1=\{z:f(z+1)=\frac{g_1(z)}{(z-z_0)^q},\ f(z-1)\neq \infty\}$, where $g_1(z_0)\neq 0, \infty$ in $E_1$ and $q(\geq kp)$ is an integer. 
If $z_0\in E_1$, then $z_0+1$ is a double pole of $S(f,z)$.
By shifting \eqref{eqm1} we obtain 
\begin{align*}
  \nonumber &f(z+2)f(z)+a(z+1)S(f,z+1)\\
&=\frac{P(z+1,f(z+1))}{(f(z+1)-b_1(z+1))^k\hat{Q}(z+1, f(z+1))}.
\end{align*}
Thus $f(z)$ has a double pole at $z_0+2$, and so $z_0+2$ is a double pole of $S(f,z)$. By iterating \eqref{eqm1} again, we have 
\begin{align*}
  \nonumber &f(z+3)f(z+1)+a(z+2)S(f,z+2)\\
  &=\frac{P(z+2,f(z+2))}{(f(z+2)-b_1(z+2))^k\hat{Q}(z+2,f(z+2))}.
\end{align*}  
Then $f(z)$ has a zero at $z_0+3$ with multiplicity $q-2$. 

If $q=3$, then $z_0+4$ is a double zero of $f$, so $z_0+4$ is a double pole of $S(f,z)$. Then $z_0+5$ is a pole of $f$ with multiplicity 3, $z_0+6$ is a pole of $f$ with multiplicity 4, $z_0+7$ is a simple zero of $f$ and $z_0+8$ is a zero of $f$ with multiplicity 4. Thus we can know that $z_0+d$ is a zero or pole of $f$ for any $d\in \mathbb{N} $.  

If $q\geq 4$, then $z_0+4$ may be another generic zero of $f-b_1$ such that $z_0+5$ is a pole of $f$. Suppose that the multiplicity of the zero of $f-b_1$ at $q_1(\geq 1)$, thus we can know that $z_0+5$ is a pole of $f$ with multiplicity $kq_1+q-2(\geq 3)$, $z_0+6$ is a double pole of $f$ and $z_0+7$ is a zero of $f$. Suppose that the multiplicity of the zero of $f$ at $z_0+7$ is $q_2(\geq 1)$. When $q_2\geq 2$, $z_0+8$ may be the generic zero of $f-b_1$. 
Similar to the above analysis, $z_0+9$ and $z_0+10$ are poles of $f$, $z_0+11$ is a zero of $f$. 
When $q_2=1$, similar to the case when $q\geq 4$, $z_0+d$ is a pole or zero of $f$ for any $d\in\mathbb{N} $ and $d\geq 5$. 

From above the discussion, we have 
\begin{equation*}
  n_{E_1}\left(r,\frac{1}{f-b_1}\right)\leq \frac{1}{k}n_{E_1}(r+1,f)+S(r,f),
\end{equation*}
where $n_{E_i}(r,f)$ is the number of multiplicities of all poles of $f$ in the set $E_i\bigcap \{z\ :\ |z|<r\}$ for $i\in \mathbb{N} $. The definition of $E_i(i\geq 2)$ will give in the following subcases.

\textbf{Subcase 1.2} Assume that $E_2=\{z:f(z+1) =\frac{g_2(z)}{z-z_0}, f(z-1)= \frac{g_3(z)}{(z-z_0)^{q-1}}\}$, where $g_m(z_0)\neq 0, \infty$ in $E_2$ for $m\in \{2,3\}$  and $q-1=kp-1\geq 2$. If $z_0 \in E_2$, then $z_0+2$ may be the generic zero of $f-b_1$, $z_0-2$ is a double pole of $f$ and $f(z_0-3)$ is finite. If $z_0-3$ is a zero of $f$, we suppose that the multiplicity of the zero of $f$ at $z_0-3$ is $q_3(\geq 1)$. When $q_3\geq 2$, $z_0-4$ may be the generic zero of $f-b_1$. When $q_3=1$, 
similar to the subcase 1.1, $z_0-d$ is a zero or pole of $f$ for any $d\in \mathbb{N} $. 

From the discussion in this subcase, we obtain  
\begin{equation*}
  n_{E_2}\left(r,\frac{1}{f-b_1}\right)\leq \frac{1}{k}n_{E_2}(r+1,f)+S(r,f).
\end{equation*}

\textbf{Subcase 1.3} Suppose that $E_3=\{z:f(z+1)=\frac{g_4(z)}{(z-z_0)^2}, f(z-1)=\frac{g_5(z)}{(z-z_0)^{q-2}}\}$, where $g_m(z_0)\neq 0, \infty$ in $E_3$ for $m\in \{4,5\}$  and  $q-2=kp-2\geq 1$. If $z_0 \in E_3$, by shifting of \eqref{eqm1}, we find that $z_0+2$ is a double pole of $f$ and $z_0+3$ may be the generic zero of $f-b_1$. 
When $q-2=1$, it's similar to the subcase 1.2, we have $z_0-2$ may be the generic zero of $f-b_1$. When $q-2=2$, we have $z_0-2$ is a double pole of $f$ and $f(z_0-3)$ is finite. When $q-2\geq 3$, it's similar to the subcase 1.1. 
From the discussion in subcase 1.3, we have 
\begin{equation*}
  n_{E_3}\left(r,\frac{1}{f-b_1}\right)\leq \frac{1}{k}n_{E_3}(r+1,f)+S(r,f).
\end{equation*}

\textbf{Subcase 1.4} Assume that $E_4=\{z:f(z+1)=\frac{g_6(z)}{(z-z_0)^{s_1}},f(z-1)=\frac{g_7(z)}{(z-z_0)^{s_2}}\}$, where $g_m(z_0)\neq 0, \infty$ in $E_4$ for $m\in \{6,7\}$ , $s_1\geq 3$, $s_2 \geq 3$ and $s_1+s_2=kp$. It's similar to the subcases 1.1, 1.2 and 1.3, we have 
\begin{equation*}
  n_{E_4}\left(r,\frac{1}{f-b_1}\right)\leq \frac{1}{k}n_{E_4}(r+1,f)+S(r,f).
\end{equation*}

By adding up the contribution from all points in $\bigcup_{i=1}^4 E_i$ to the corresponding counting functions, it follows that 
\begin{equation*}
  n\left(r,\frac{1}{f-b_1}\right)=\sum_{i=1}^{4}n_{E_i}\left(r,\frac{1}{f-b_1}\right)\leq \frac{1}{k}n(r+1,f)+S(r,f).
\end{equation*}

 Thus both conditions of Lemma \ref{lema} are satisfied, and so $\sigma_f >0$. This is a contradiction. 

\textbf{Case 2} Assume that
 \begin{equation*}
  \deg_fP\geq k+\deg_f(\hat{Q})+1
 \end{equation*}
and $\deg_fR=\alpha(\geq 1)$. 

\textbf{Subcase 2.1} Suppose the set $E_5=\{z:f(z+1)=\frac{g_8(z)}{(z-z_0)^q},\ f(z-1)\neq \infty\}$ and $z_0\in E_5$, where $g_8(z_0)\neq 0, \infty$ in $E_5$ and $q\geq kp \geq 3$. 

If $\alpha \geq 2$, we continue to iterate the shifting and find that $z_0+2$ is a pole of $f$ with multiplicity $q\alpha $, $z_0+3$ is a pole of $f$ with multiplicity $q(\alpha ^2-1 )\geq 2$, $z_0+4$ is a pole of $f$ with multiplicity $q(\alpha ^3-2\alpha)\geq 2$ and $z_0+5$ is $q(\alpha ^4-3\alpha ^2+1 )\geq 2$. 
From the recurrence relation we can know that $z_0+d$ is a multiple pole of $f$ for any $d\in\mathbb{N} $. 

If $\alpha =1$, then $z_0+2$ is a pole of $f$ with multiplicity $q$, $z_0+3$ may be another zero of $f-b_1$. 

From the above analysis, we have 
\begin{equation*}
  n_{E_5}\left(r,\frac{1}{f-b_1}\right)\leq \frac{1}{k}n_{E_5}(r+1,f)+S(r,f),
\end{equation*}
where $n_{E_5}(r,f)$ is the number of multiplicities of all poles of $f$ in the set $E_5\bigcap \{z\ :\ |z|<r\}$. 

\textbf{Subcase 2.2} Assume that $E_6=\{z:f(z+1)=\frac{g_9(z)}{(z-z_0)^{q-1}}, f(z-1)=\frac{g_{10}(z)}{z-z_0}\}$ and $z_0\in E_6$, where $g_m(z_0)\neq 0, \infty$ in $E_6$ for $m\in \{9,10\}$  and $q-1=kp-1\geq 2 $. 

If $\alpha =1$, from continuing iteration of shifting, we have that $z_0-2$ is a simple pole and $z_0-3$ may be another generic zero of $f-b_1$. 
When $q-1=2$, $z_0+2$ may be the generic zero of $f-b_1$. When $q-1\geq 3$, similar to the subcase 2.1, $z_0+2$ is a pole of $f$ with multiplicity $q-1$ and $z_0-3$ may be another zero of $f-b_1$.  

If $\alpha >1$, we find that $z_0-2$ is a pole of $f$ with multiplicity $\alpha \geq 2$, $z_0-3$ is a pole of $f$ with multiplicity $\alpha ^2-1\geq 2$ and $z_0-4$ is a pole of $f$ with multiplicity $\alpha ^3-3\alpha \geq2$. From the recurrence relation again, we find that $z_0\pm d$ is a multiple pole of $f$ for any $d\in\mathbb{N} $.

Thus, we obtain 
\begin{equation*}
  n_{E_6}\left(r,\frac{1}{f-b_1}\right)\leq \frac{1}{k}n_{E_6}(r+1,f)+S(r,f).
\end{equation*}

\textbf{Subcase 2.3} Suppose that $E_7=\{z:f(z+1)=\frac{g_{11}(z)}{(z-z_0)^{q-2}}, f(z-1)=\frac{g_{12}(z)}{(z-z_0)^2}\}$ and $z_0\in E_7$, where $g_m(z_0)\neq 0, \infty$  in $E_7$ for $m\in \{11,12\}$ and $q-2=kp-2\geq 1$. 

If $\alpha =1$, then $z_0-2$ may be a generic zero of $f-b_1$. When $q-2=1$ or $q-2\geq 3$ , similar to the subcase 2.2 we have that $z_0+2$ is a pole of $f$ with multiplicity $q-2$ and $z_0+3$ is a generic zero of $f-b_1$. 
When $q-2=2$, $z_0+2$ may be a generic zero of $f-b_1$.  

If $\alpha >1$, it similar to the subcase 2.1 and we can obtain that $z_0\pm d$ is a pole of $f$ for any $d\in\mathbb{N} $. 

From the above,  when $\alpha =1$ we have 
\begin{equation*}
  n_{E_7}\left(r,\frac{1}{f-b_1}\right)\leq \frac{2}{k}n_{E_7}(r+1,f)+S(r,f).
\end{equation*}
When $\alpha >1$, we obtain 
\begin{equation*}
  n_{E_7}\left(r,\frac{1}{f-b_1}\right)\leq \frac{1}{k}n_{E_7}(r+1,f)+S(r,f).
\end{equation*}

\textbf{Subcase 2.4} Suppose that $E_8=\{z:f(z+1)=\frac{g_{13}(z)}{(z-z_0)^{s_1}}, f(z-1)=\frac{g_{14}(z)}{(z-z_0)^{s_2}}\}$ and $z_0\in E_8$, where $g_m(z_0)\neq 0, \infty$ in $E_8$ for $m\in \{13,14\}$  and $s_1+s_2=kp$. Similar to subcase 2.1, we have 
\begin{equation*}
  n_{E_8}\left(r,\frac{1}{f-b_1}\right)\leq \frac{1}{k}n_{E_8}(r+1,f)+S(r,f).
\end{equation*}

By adding up the contribution from all points in $\bigcup_{i=1}^4 E_i$ to the corresponding counting functions, 
if $\alpha >1$, we get 
\begin{equation*}
  n\left(r,\frac{1}{f-b_1}\right)=\sum_{i=5}^{8}n_{E_i}\left(r,\frac{1}{f-b_1}\right)\leq \frac{1}{k}n(r+1,f)+O(1). 
\end{equation*} 
If $\alpha =1$, we have 
\begin{equation*}
  n\left(r,\frac{1}{f-b_1}\right)=\sum_{i=5}^{8}n_{E_i}\left(r,\frac{1}{f-b_1}\right)\leq \frac{2}{k}n(r+1,f)+O(1).
\end{equation*}
Thus, by Lemma \ref{lema}, we have $\sigma _f>0$, a contradiction.
\end{proof}

\begin{proof2}
 We know that $\deg_fR=\max \{\deg_fP, \deg_fQ\}\leq 7$ from Lemma \ref{lem1}, This completes the proof of the first part. 

Assume that $\deg_fP\geq \deg_fQ+3$, then $\deg_fR=\deg_fP \geq 3$ and suppose that $\alpha =\deg_fR$ for convenience. We divide two cases as follows. 

\textbf{Case 1} Suppose that $f(z)$ has finitely many poles. From \eqref{eqm1}, we see that $N(r,S(f,z))=O(\log r)$. Then we have $T(r,S(f,z))=S(r,f)$ and 
\begin{align*}
  (\deg_fR)T(r,f)&\leq T(r,f(z+1)f(z-1))+T(r,S(f,z))+S(r,f)\\
  &\leq m\left(r,\frac{f(z+1)}{f(z)}\right)+m\left(r,\frac{f(z-1)}{f(z)}\right)+2m(r,f)+S(r,f)\\
  &\leq 2T(r,f)+S(r,f),
\end{align*}
where $r\not \in A_3$, where $A_3$ is a set with zero upper-density measure.
 So it contradicts to $\deg_fR\geq 3$.

\textbf{Case 2} Assume that $f(z)$ has infinitely many poles. 
Let $z_0$ be a generic pole of $f(z)$ with multiplicity $p$. 
Then $R(z,f)$ has a pole at $z_0$ with multiplicity $p\alpha (\geq 3)$, and $z_0$ is either a regular point or a double pole of $S(f,z)$. 
So $f(z+1)f(z-1)$ has a pole at $z_0$ with multiplicity $p\alpha$.

Suppose that $z_0+1$ is a pole of $f$ with multiplicity  $p_1(\geq 0) $, $z_0-1$ is pole of $f$ with multiplicity $p_2(\geq 0)$, where $p_1+p_2=p\alpha $. 

\textbf{Subcase 2.1} Assume that $p_1\alpha -p\leq 0$, then we have that $p_2\geq \frac{p(\alpha ^2-1)}{\alpha }$ and $z_0-d$ is a pole of $f$ from the recurrence relation. 
For sufficiently large $d\in \mathbb{N} $ we find that  the multiplicity of the pole of $f$ at $z_0-d$ is 
\begin{equation*}
\frac{2p_2 - p(\alpha  -\sqrt{\alpha ^2-4} )}{2\sqrt{\alpha ^2-4} }\cdot \left(\frac{\alpha +\sqrt{\alpha ^2-4} }{2}\right)^{d-1}+O\left(1\right).
\end{equation*}
Thus, similar to the proof of Lemma  \ref{lem1}, we have 
\begin{equation*}
  n(d+|z_0|,f)\geq (\alpha -1 )^{d-2}+O(1)
\end{equation*} 
for all sufficiently large $d\in \mathbb{N}$. 
In this subcase, we also have equation \eqref{eqm5} holds, which is a contradiction.

\textbf{Subcase 2.2} Suppose that $p_1\alpha-p\geq 1$, similar to the proof in Lemma \ref{lem1}, we have a contradiction again.

\end{proof2}

\section{Proof of Theorem 2}

Obviously, none of $b_1, b_2, \cdots, b_n$ is a solution of \eqref{eqm1} and $\deg_fQ=n+\deg_f\hat{Q}$. We consider 
three cases as follows. 

\textbf{Case 1} Suppose that $\deg_fP=\deg_fQ+1$. 

\textbf{(1)} Assume that the $b_i's$ are non-constant for all $i\in\{1,\cdots,n\}$. We will derive a contradiction. Without loss of generality, let $z_0$ be a generic zero of $f-b_1$ with multiplicity $q$ such that $b_1'(z_0)\neq 0$. 
Firstly, we will prove $f'(z_0)=0$ is impossible. If $f'(z_0)=0$, then we find that $q=1$, $z_0$ is a double pole of $S(f,z)$ and $f(z+1)f(z-1)$ has a double pole at $z_0$. 

\textbf{Subcase 1.1 } Suppose that $U_1=\{z:f(z+1)=\frac{h_1(z)}{(z-z_0)^{q_1}},f(z-1)\neq \infty\}$ and $z_0\in U_1$, where $h_1(z)\neq 0, \infty$ in $U_1$ and $q_1\geq 2$. 

When $q_1=2$, then $z_0+2$ may be another zero of $f-b_i$. If $f(z_0+2)\neq b_i(z_0+2)$ for $i \in \{1,\cdots, n\}$, then $f(z+3)$ has a double zero at $z_0$, $z_0+4$ is a double pole of $f$ and $z_0+5$ may be the generic zero of $f-b_i$.  If $f(z_0+2)=b_i(z_0+2)$ for $i \in \{1,\cdots , n\}$, we assume that the multiplicity of the  generic zero of $f-b_i$ at $z_0+2$ is $p(\geq 1)$. When $p=1$, for $f'(z_0+2)\neq 0$, $z_0+3$ is a simple zero of $f$ and $z_0-4$ may be the generic zero of $f-b_i$. For $f'(z_0+2)=0$,  $f(z_0+3)$ is finite. 
When $p=2$, we also have $f(z_0+3)$ is finite.  When $p=3$, $z_0+3$ and $z_0+4$ are simple poles of $f$, $z_0+5$ may be the generic zero of $f-b_i$. 
When $p=4$, $z_0+3$ is a double pole of $f$ and $z_0+4$ may be the generic zero of $f-b_i$. When $p\geq 5$, $f(z_0+3)$ and $f(z_0+4)$ are poles of $f$. When $q_1\geq 3$, it's similar to the situation when $q_1=2$. 

From the discussion above, we have 
\begin{equation*}
  \sum_{i=1}^{n} n_{U_1}\left(r,\frac{1}{f-b_i}\right)\leq 2n_{U_1}(r+1,f)+O(1), 
\end{equation*}
where $n_{U_i}(r,f)$ is the number of multiplicities of all poles of $f$ in the set $U_i\bigcap \{z\ :\ |z|<r\}$ for $i\in \mathbb{N} $. The definition of $U_i(i\geq 2)$ will give in the following subcases.

\textbf{Subcase 1.2} Assume that $U_2=\{z:f(z+1)=\frac{h_2(z)}{z-z_0}, f(z-1)=\frac{h_3(z)}{z-z_0}\}$ and $z_0\in U_2$, where $h_m(z)\neq 0, \infty$ in $U_2$ for $m\in \{2,3\}$,  then we have $z_0\pm 2$ are simple poles of $f$ by shifting \eqref{eqm1} and $z_0\pm 3$ are another generic zeros of $f-b_1$.  
In this situation, we can also get equation 
\begin{equation*}
  \sum_{i=1}^{n} n_{U_2} \left(r,\frac{1}{f-b_i}\right)\leq  n_{U_2}(r+1,f)+O(1). 
\end{equation*}
 
From the above analysis, we have 
\begin{align*}
  \sum_{i=1}^{n} n \left(r,\frac{1}{f-b_i}\right)&=\sum_{i=1}^{n}\left(n_{U_1}\left(r,\frac{1}{f-b_i}\right)+n_{U_2}\left(r,\frac{1}{f-b_i}\right)\right)\\
  &\leq  2n(r+1,f)+O(1). 
\end{align*}
Thus $\sigma _f>0$ and we get a contradiction by Lemma \ref{lem1}.   

Therefore, it's clear that $f'(z_0)\neq 0$. In this case, $z_0$ is a regular point of $S(f,z)$ and a pole of $f(z+1)f(z-1)$ with multiplicity $q(\geq 2)$. 

\textbf{Subcase 1.3} Assume that $U_3=\{z:f(z+1)=\frac{h_4(z)}{(z-z_0)^2}, f(z-1)\neq \infty\}$ and $z_0\in U_3$, where $h_4(z)\neq 0, \infty$ in $U_3$, then we have that $z_0+2$ may be the generic zero of $f-b_i$. It follows that 
\begin{equation*}
  \sum_{i=1}^{n} n_{U_3} \left(r,\frac{1}{f-b_i}\right)\leq 2 n_{U_3}(r+1,f)+O(1). 
\end{equation*}

\textbf{Subcase 1.4} Suppose that $U_4=\{z:f(z+1)=\frac{h_5(z)}{z-z_0}, f(z-1)=\frac{h_6(z)}{z-z_0}\}$ and $z_0\in U_4$, where $h_m(z)\neq 0, \infty$ in $U_4$ for $m\in \{5,6\}$, then $z_0 \pm 2$ are simple poles of $f$ and $z_0\pm 3$ may be the generic zeros of $f-b_i$. Thus we obtain 
\begin{equation*}
  \sum_{i=1}^{n} n_{U_4} \left(r,\frac{1}{f-b_i}\right)\leq  n_{U_4}(r+1,f)+O(1). 
\end{equation*}

\textbf{Subcase 1.5} Assume that $U_5=\{z:f(z_0+1)=\frac{h_7(z)}{(z-z_0)^{q_2}}, f(z_0-1)\neq \infty\}$ and $z_0\in U_5$, where $h_7(z)\neq 0, \infty$ in $U_5$ and $q_2\geq 3$. By iteration of shifting, we can know that $z_0+2$ is a pole of $f$ with multiplicity $q_2$ and $z_0+3$ may be another zero of $f-b_i$ such that $f(z_0+4)$ is finite. 
Therefore, it follows that 
\begin{equation*}
  \sum_{i=1}^{n} n_{U_5} \left(r,\frac{1}{f-b_i}\right)\leq  n_{U_5}(r+1,f)+O(1). 
\end{equation*}

\textbf{Subcase 1.6} Suppose that $U_6=\{z: f(z+1)=\frac{h_8(z)}{(z-z_0)^{q-1}}, f(z-1)=\frac{h_9(z)}{z-z_0}\}$ and $z_0\in U_6$, where $h_m(z)\neq 0, \infty$ in $U_6$ for $m\in \{8,9\}$  and $q\geq 3$. Similar to the subcase 1.4, $z_0-2$ is a simple pole of $f$ and $z_0-3$ may be the generic zero of $f-b_i$. When $q-1=2$, it's similar to the subcase 1.3 and we have $z_0+2$ may be the generic zero of $f-b_i$. 
When $q-1\geq 3$, we find that $z_0+2$ is a pole of $f$ and $f(z_0+3)$ is finite. From the above analysis, we have 
\begin{equation*}
  \sum_{i=1}^{n} n_{U_6} \left(r,\frac{1}{f-b_i}\right)\leq  2n_{U_6}(r+1,f)+O(1). 
\end{equation*}

 \textbf{Subcase 1.7} Assume that $U_7=\{z:f(z+1)=\frac{h_{10}(z)}{(z-z_0)^{q-2}}, f(z-1)=\frac{h_{11}(z)}{(z-z_0)^2}\}$ and $z_0\in U_7$, where $h_m(z)\neq 0, \infty$ in $U_7$ for $m\in \{10, 11\}$ and $q\geq 3$, then $z_0-2$ may be a generic zero of $f-b_i$ with multiplicity 2, $i\in \{1,\cdots, n\}$. Similar to the subcases 1.3-1.6, we have 
\begin{equation*}
  \sum_{i=1}^{n} n_{U_7} \left(r,\frac{1}{f-b_i}\right)\leq  2n_{U_7}(r+1,f)+O(1). 
\end{equation*}

\textbf{Subcase 1.8} Suppose that $U_8=\{z:f(z+1)=\frac{h_{12}(z)}{(z-z_0)^{s_1}}, f(z-1)=\frac{h_{13}(z)}{(z-z_0)^{s_2}}\}$ and $z_0\in U_8$, where $h_m(z)\neq 0, \infty$ in $U_8$ for $m\in \{12,13\}$, $s_1 \geq 3$, $s_2\geq 3$ and $s_1+s_2=q$. Similar to the subcase 1.5, we find that 
\begin{equation*}
  \sum_{i=1}^{n} n_{U_8} \left(r,\frac{1}{f-b_i}\right)\leq  n_{U_8}(r+1,f)+O(1). 
\end{equation*}

From the above discussion, we can conclude that 
\begin{equation*}
  \sum_{i=1}^{n} n \left(r,\frac{1}{f-b_i}\right)=\sum_{i=1}^{n}\sum_{j=3}^{8}n_{U_j}\left(r,\frac{1}{f-b_i}\right)\leq  2n(r+1,f)+O(1). 
\end{equation*}
Then we have a contradiction from the Lemma \ref{lem1}. 

\textbf{(2)} Suppose that the $b_i'$s are constant for all $i\in\{1,2,\cdots,n\}$. Then by the condition and Lemma \ref{lema}, 
$f-b_i$ has infinitely many zeros with multiplicity 1 or not less than 3.
Without loss of generality, we may assume that $z_1$ is a generic zero of $f-b_1$ with multiplicity s, where $s\geq 3$ or $s=1$. If $s=1$, then $f'(z)\neq 0$ and so $S(f,z)$ is regular at $z_1$. If $s\geq 3$, then 
$f'(z_1)=0$ and $S(f,z)$ has a double pole at $z_1$. Hence, for each case, we conclude that $f(z+1)f(z-1)$ has a pole with multiplicity $s$ at $z_1$.  
Similar to the proof of subcases 1.1-1.8, we also get
\begin{equation}\label{eqm4}
  \sum_{i=1}^{n} n \left(r,\frac{1}{f-b_i}\right)\leq  n(r+1,f)+O(1). 
\end{equation}
Therefore, it follows that $\sigma _f>0$, a contradiction from Lemma \ref{lem1}. 

\textbf{Case 2} Suppose that $\deg_fP=\deg_fQ+2$. 

\textbf{(3)}Assume that the $b_i$'s are non-constant for all $i \in \{1,\cdots, n\}$. We can let $z_0$ be a generic zero of $f-b_1$ with multiplicity $q$ such that $b_1'(z_0)\neq 0$ without loss of generality. If $f'(z_0)=0$, then $q=1$ and $z_0$ is a double pole of $S(f,z)$, so 
$f(z+1)f(z-1)$ has a double pole at $z_0$. 

\textbf{Subcase 2.1} Suppose that $U_9= \{z:f(z+1)=\frac{h_{14}(z)}{(z-z_0)^{q_3}}, f(z-1)\neq \infty\}$, where $h_{14}(z)\neq 0, \infty$ in $U_9$ and $q_3\geq 2$. If $z_0\in U_9$, then $z_0+2$ is a pole of $f$ with multiplicity at least 4, $z_0+3$ is a pole of $f$ with multiplicity at least 6 and $z_0+4$ is a pole of $f$ with multiplicity at least 8. From the recurrence relation, we have that $z_0+d$ is a multiple pole of $f$ with multiplicity at least $2d$ for any $d\in \mathbb{N} $. 
Therefore, it follows that 
\begin{equation*}
  \sum_{i=1}^{n} n_{U_9} \left(r,\frac{1}{f-b_i}\right)\leq\frac{1}{2} n_{U_9}(r+1,f)+O(1). 
\end{equation*}

\textbf{Subcase 2.2} Assume that $U_{10}=\{z:f(z+1)=\frac{h_{15}(z)}{z-z_0}, f(z-1)=\frac{h_{16}(z)}{z-z_0}\}$ and $z_0\in U_{10}$, where $h_m(z)\neq 0, \infty$ in $U_{10}$ for $m\in \{15,16\}$. Then $z_0\pm 2$ are double poles of $f$, $z_0\pm 3$ are poles of $f$ with multiplicity 3 and $z_0\pm 4$ are poles of $f$ with multiplicity 4. Similar to the subcase 2.1, $z_0\pm d$ are poles of $f$ for any $d\in \mathbb{N} $. Thus we have 
\begin{equation*}
  \sum_{i=1}^{n} n_{U_{10}} \left(r,\frac{1}{f-b_i}\right)\leq \frac{1}{2} n_{U_{10}}(r+1,f)+O(1).  
\end{equation*}

Therefore, we add up the contribution from all points $z$ in the set $U_9\bigcup U_{10}$ to the corresponding counting functions, we have 
\begin{align*}
  \sum_{i=1}^{n} n \left(r,\frac{1}{f-b_i}\right)&=\sum_{i=1}^{n}\left(n_{U_9}\left(r,\frac{1}{f-b_i}\right)+n_{U_{10}}\left(r,\frac{1}{f-b_i}\right)\right)\\
 & \leq \frac{1}{2} n(r+1,f)+O(1). 
\end{align*}
 
 So it follows that $\sigma _f>0$ by Lemma \ref{lema}, a contradiction. From the discussion above, we have $f'(z_0)\neq 0$. In this case, $z_0$ is a regular point of $S(f,z)$ and a pole of $f(z+1)f(z-1)$ with multiplicity $q$. Similar to the subcases 2.1 and 2.2, we can have the same inequality \eqref{eqm4}. 
 thus we get a contradiction by Lemma \ref{lema}.

\textbf{Case 3} Suppose that $\deg_fP\leq \deg_fQ.$

\textbf{(4)} Assume that $f-b_i$ has infinitely many multiple zeros for $i\in \{1,\cdots, n\}$. Then we will prove by contradiction. Without loss of generality, let $z_0$ be a generic zero of $f-b_1$ with multiplicity $p(\geq 2)$ such that 
$b_1'(z_0)\neq 0$. Then $f'(z_0)\neq 0$ and $f(z+1)f(z-1)$ has a pole with multiplicity $p$ at $z_0$. 

\textbf{Subcase 3.1} Suppose that $U_{11}=\{f(z_0+1)=\frac{h_{17}(z)}{(z-z_0)^{q_4}}, f(z_0-1)\neq \infty\}$ and $z_0\in U_{11}$, where $h_{17}(z)\neq 0, \infty$ in $U_{11}$ and $q_4\geq p$, then $f(z+2)$ has a double pole at $z_0$.  
When $q_4=2$, we have $z_0+3$ may be the generic zero of $f-b_i$. 
When $q_4=3$, then $z_0+3$ is a simple zero of $f$, $z_0+4$ is a double zero of $f$, $z_0+5$ is a pole of $f$ with multiplicity 3 and $z_0+6$ is a pole of $f$ with multiplicity 4. From continuing the iteration of shifting, we know that $z_0+d$ is not a generic zero of $f-b_i$ for all $d\in \mathbb{N} $.    
When $q_4\geq 4$, we find that $z_0+3$ is a zero of $f$ with multiplicity $q_4-2(\geq 2)$ and $z_0+4$ may be another generic zero of $f-b_1$. 
From the above analysis, we obtain 
\begin{equation*}
  \sum_{i=1}^{n} n_{U_{11}} \left(r,\frac{1}{f-b_i}\right)\leq  n_{U_{11}}(r+1,f)+O(1), 
\end{equation*}

\textbf{Subcase 3.2} Assume that $U_{12}=\{z:f(z+1)=\frac{h_{18}(z)}{z-z_0}, f(z-1)=\frac{h_{19}(z)}{z-z_0}\}$ and $z_0\in U_{12}$, where  $h_m(z)\neq 0, \infty$ in $U_{12}$ for $m\in \{18,19\}$ , by iteration of shifting \eqref{eqm1}, we find that $z_0\pm 2$ may be generic zeros of $f-b_i$. 
Thus we have 
\begin{equation*}
  \sum_{i=1}^{n} n_{U_{12}} \left(r,\frac{1}{f-b_i}\right)\leq  2n_{U_{12}}(r+1,f)+O(1).  
\end{equation*}

\textbf{Subcase 3.3} Assume that $U_{13}=\{f(z+1)=\frac{h_{20}(z)}{(z-z_0)^{s_1}}, f(z-1)=\frac{h_{21}(z)}{(z-z_0)^{s_2}}\}$ and $z_0\in U_{13}$, where  $h_m(z)\neq 0, \infty$ in $U_{13}$ for $m\in \{20, 21\}$ , $s_1\geq 1$, $s_2\geq 1$ and $s_1+s_2=p$. It's similar to the subcase 3.1 and 3.2 and we have that 
\begin{equation*}
  \sum_{i=1}^{n} n_{U_{13}} \left(r,\frac{1}{f-b_i}\right)\leq  2n_{U_{13}}(r+1,f)+O(1).  
\end{equation*}
From the above discussion, we have 
\begin{equation*}
  \sum_{i=1}^{n} n \left(r,\frac{1}{f-b_i}\right)=\sum_{i=1}^{n}\sum_{j=11}^{13}n_{U_j}\left(r,\frac{1}{f-b_i}\right)\leq  2n(r+1,f)+O(1).  
\end{equation*}
Therefore, it follows that we get a contradiction by Lemma \ref{lema}. 

\textbf{(5)} Suppose that $f-b_i$ has infinitely many zeros with multiplicity 3 or higher. We will again prove by contradiction. Without loss of generality, let $z_0$ be a generic zero of 
$f-b_1$ with multiplicity $p\geq 3$. From \eqref{eqm1}, $f(z+1)f(z-1)$ has a pole at $z_0$ with multiplicity $p$. 

\textbf{Subcase 3.4} Assume that $U_{14}=\{f(z_0+1)=\frac{h_{22}(z)}{(z-z_0)^{q_5}}, f(z_0-1)\neq \infty\}$ and $z_0\in U_{14}$, where $h_{22}(z)\neq 0, \infty$ in $U_{14}$ and $q_5\geq p$, then $z_0$ is a double pole of $f(z+2)$ and a zero of $f(z+3)$ with multiplicity $q_5-2\geq 1$. 
Similarly to the proof in subcase 3.1, we have 
\begin{equation*}
  \sum_{i=1}^{n} n_{U_{14}} \left(r,\frac{1}{f-b_i}\right)\leq  n_{U_{14}}(r+1,f)+O(1).  
\end{equation*}

\textbf{Subcase 3.5} Suppose that $U_{15}=\{f(z+1)=\frac{h_{23}(z)}{(z-z_0)^{p-1}},f(z-1)=\frac{h_{24}(z)}{z-z_0}\}$ and $z_0\in U_{15}$,  where  $h_m(z)\neq 0, \infty$ in $U_{15}$ for $m\in \{23, 24\}$ , then $f(z_0-2)$ is finite. 
When $p=3$, $z_0+2$ is a double pole of $f$ and $z_0+3$ may be the generic zero of $f-b_i$. When $p\geq 4$, it's similar to the argument in subcase 3.1. So we give 
\begin{equation*}
  \sum_{i=1}^{n} n_{U_{15}} \left(r,\frac{1}{f-b_i}\right)\leq  n_{U_{15}}(r+1,f)+O(1).  
\end{equation*}
 
\textbf{Subcase 3.6} Assume that $U_{16}=\{f(z+1)=\frac{h_{25}(z)}{(z-z_0)^{s_1}}, f(z-1)=\frac{h_{26}(z)}{(z-z_0)^{s_2}}\}$ and $z_0\in U_{16}$, where  $h_m(z)\neq 0, \infty$ in $U_{16}$ for $m\in \{25, 26\}$ and $s_1+s_2=p$. Similarly to the proof in subcase 3.4 and 3.5, we have 
\begin{equation*}
  \sum_{i=1}^{n} n_{U_{16}} \left(r,\frac{1}{f-b_i}\right)\leq  n_{U_{16}}(r+1,f)+O(1).  
\end{equation*}

From the above analysis, we obtain 
\begin{equation*}
  \sum_{i=1}^{n} n \left(r,\frac{1}{f-b_i}\right)=\sum_{i=1}^{n}\sum_{j=14}^{16}n_{U_j}\left(r,\frac{1}{f-b_i}\right)\leq  n(r+1,f)+O(1).  
\end{equation*}
Therefore, we can get $\sigma _f>0$ by Lemma \ref{lem1}, a contradiction. 

\section{Proof of Theorem 3}

\ \ \ \ Notice that $b_1$ is not a solution of \eqref{eqm1}, and $\deg_fQ=2+\deg_f \hat{Q}$. Suppose that $f-b_1$ has only finitely many simple zeros. By the proof in \cite[Lemma 2.3]{Nie-2025}, we have 
\begin{equation*}
  N\left(r,\frac{1}{f-b_1}\right)=T(r,f)+S(r,f).
\end{equation*}
Thus, $f-b_1$ has infinitely many multiple zeros. Let $z_0$ be a generic zero of $f-b_1$ with multiplicity $p(\geq 2)$. 

\textbf{Case 1} Suppose that $\deg_f P=\deg_f Q+1$. If $b_1$ is a constant, then $f'(z_0)=0$ and hence $z_0$ is a double pole of $S(f,z)$. By iteration of shifting, $f(z+1)f(z-1)$ has a pole with multiplicity $2p$ at $z_0$.  

\textbf{Subcase 1.1} Assume that $L_1=\{z:f(z+1)=\frac{l_1(z)}{(z-z_0)^{q_1}},f(z-1)\neq\infty\}$ and $z_0\in L_1$, where $l_1(z)\neq 0, \infty$ in $L_1$ and $q_1\geq 2p$, then $z_0+2$ is a pole of $f$ with multiplicity at least $2p$ and $z_0+3$ may be another generic zero of $f-b_1$. 
Thus we have 
\begin{equation*}
   n_{L_1} \left(r,\frac{1}{f-b_1}\right)\leq  \frac{1}{2}n_{L_1}(r+1,f)+O(1), 
\end{equation*}
where $n_{L_i}(r,f)$ is the number of multiplicities of all poles of $f$ in the set $L_i\bigcap \{z\ :\ |z|<r\}$ for $i\in \mathbb{N} $. The definition of $L_i(i\geq 2)$ will give in the following subcases.

\textbf{Subcase 1.2} Suppose that $L_2=\{z:f(z+1)=\frac{l_2(z)}{(z-z_0)^{2p-1}}, f(z-1)=\frac{l_3(z)}{z-z_0}\}$ and $z_0\in L_2$, where $l_m(z)\neq 0, \infty$ in $L_2$ for $m\in \{2,3\}$, then $z_0-2$ is a simple pole of $f$ and $z_0-3$ is the generic zero of $f-b_1$. Similarly, we find that $z_0+2$ is a pole of $f$ with multiplicity $2p-1$ and $f(z_0+3)$ is finite. 
In summary, we have 
\begin{equation*}
  n_{L_2} \left(r,\frac{1}{f-b_1}\right)\leq  \frac{1}{2}n_{L_2}(r+1,f)+O(1). 
\end{equation*}

\textbf{Subcase 1.3} Suppose that $L_3=\{z:f(z+1)=\frac{l_4(z)}{(z-z_0)^{s_1}}, f(z-1)=\frac{l_5(z)}{(z-z_0)^{s_2}}\}$ and $z_0\in L_3$, where $s_1, s_2\geq 3$, $s_1+s_2=2p$ and $l_m(z)\neq 0, \infty$ in $L_3$ for $m\in \{4,5\}$ . Similar to the argument in subcase 1.1, we find that 
\begin{equation*}
  n_{L_3} \left(r,\frac{1}{f-b_1}\right)\leq  \frac{1}{2}n_{L_3}(r+1,f)+O(1). 
\end{equation*} 
By considering all generic zeros of $f-b_1$ in the set $\bigcup _{i=1}^{3}L_{i}$, it follows that 
\begin{equation*}
  n\left(r,\frac{1}{f-b_1}\right)=\sum_{i=1}^{3}n_{L_i}\left(r,\frac{1}{f-b_1}\right)\leq \frac{1}{2}n(r+1,f)+O(1).
\end{equation*}
Thus $\sigma _f>0$ by Lemma \ref{lema}, a contradiction. Hence, $b_1$ is non-constant. So there exists a point $z_0$ such that $b_1'(z_0)\neq 0$ and $f'(z_0)\neq 0$. Similarly with the case when $b_1$ is constant, we also obtain $\sigma _f>0$, a contradiction.

 \textbf{Case 2} Assume that $\deg_fP=\deg_fQ+2$. If $b_1$ is a constant, then $f'(z_0)=0$ and hence $z_0$ is a double pole of $S(f,z)$. By iteration of shifting, $f(z+1)f(z-1)$ has a pole with multiplicity $2p$ at $z_0$.    

 \textbf{Subcase 2.1} Suppose that $L_4=\{z:f(z+1)=\frac{l_6(z)}{(z-z_0)^{q_2}},f(z-1)\neq\infty\}$ and $z_0\in L_4$, where $l_6(z)\neq 0, \infty$ in $L_4$ and $q_2\geq 2p$. From the recurrence relation, we know that $z_0+d$ is multiple poles of $f$ for any $d\in \mathbb{N} $. Thus, it follows that 
 \begin{equation*}
  n_{L_4} \left(r,\frac{1}{f-b_1}\right)\leq  \frac{1}{2}n_{L_4}(r+1,f)+O(1). 
\end{equation*}

\textbf{Subcase 2.2} Assume that $L_5=\{z:f(z+1)=\frac{l_7(z)}{(z-z_0)^{s_1}}, f(z-1)=\frac{l_{8}(z)}{(z-z_0)^{s_2}}\}$ and $z_0\in L_5$, where $l_m(z)\neq 0, \infty$ in $L_5$ for $m\in \{7,8\}$ , $s_1\geq 1$, $s_2\geq 1$ and $s_1+s_2=2p$. Similar to the proof in subcase 2.1, we have 
\begin{equation*}
  n_{L_5} \left(r,\frac{1}{f-b_1}\right)\leq  \frac{1}{2}n_{L_5}(r+1,f)+O(1). 
\end{equation*}

In summary, it follows that \begin{equation*}
  n\left(r,\frac{1}{f-b_1}\right)=n_{L_4}\left(r,\frac{1}{f-b_1}\right)+n_{L_5} \left(r,\frac{1}{f-b_1}\right)\leq \frac{1}{2}n(r+1,f)+O(1).
\end{equation*}
Thus $\sigma _f>0$ by Lemma \ref{lema}, a contradiction. Hence, $b_1$ is non-constant. So there exists a point $z_0$ such that $b_1'(z_0)\neq 0$ and $f'(z_0)\neq 0$. Similarly with the case when $b_1$ is constant, we also obtain $\sigma _f>0$, a contradiction.

\footnotesize

\end{document}